\def\sqr#1#2{{\vbox{\hrule height.#2pt
     \hbox{\vrule width.#2pt height#1pt \kern#1pt
           \vrule width.#2pt}
     \hrule height.#2pt}}}
\newenvironment{proof}{\medskip\par\noindent{\bf Proof\/}.\quad}{\qquad
\raisebox{-0.5mm}{\rule{1.5mm}{1mm}}\vspace{6pt}}
\newcommand{\R}{\mathbb{R}}
\newtheorem{Thm}{Theorem}[section]
\newtheorem{Lem}{Lemma}[section]
\newtheorem{Pro}{Proposition}[section]
\newtheorem{Def}{Definition}[section]
\numberwithin{equation}{section}
\title{{Existence and Multiplicity of $L^2$-Normalized Solutions for a periodic Schr\"{o}dinger system of Hamiltonian type}\thanks{The second author was supported by NSFC of China (12301136). The third author was supported by NSFC of Chiana (12261107) and Yunnan Key Laboratory of Modern Analytical Mathematics and Applications (No. 202302AN360007).}}
\author{Ruowen Qiu$^{1}$,~Yuanyang Yu$^{2}$ and Fukun Zhao$^{1,3}$\thanks{Corresponding author. E-mall address: fukunzhao@163.com}\\
{\small 1.Department of Mathematics, Yunnan Normal University,}\\
{\small 2.Department of Mathematics and Statistics, Yunnan University,}\\
{\small 3.Yunnan Key Laboratory of Modern Analytical Mathematics and Applications}\\
 {\small Cheng Gong, Kunming, 650500}}
\date{}
\date{}
\begin{document}
\maketitle

\noindent{\bf Abstract}\quad In this paper, we study the following
nonlinear Schr\"{o}dinger system of Hamiltonian type
\begin{equation*}
\left\{\begin{array}{l}
-\Delta u+V(x)u=\partial_v H(x,u,v)+\omega v,\  x \in \mathbb{R}^N, \\
-\Delta v+V(x)v=\partial_u H(x,u,v)+\omega u,\  x \in \mathbb{R}^N, \\
\displaystyle\int_{\R^N}|z|^2dx=a^2,
\end{array}\right.
\end{equation*}
where the potential function $V(x)$ is periodic, $z:=(u,v):\R^N\rightarrow \R\times\R$,  $\omega\in \R$ appears as a Lagrange multiplier, $a>0$ is a prescribed constant.
The existence and multiplicity of $L^2$-normalized solutions for the above
Schr\"{o}dinger system are obtained, and the combination of the Lyapunov-Schmidt reduction, a perturbation argument and the multiplicity theorem of Ljusternik-Schnirelmann is involved in the proof. In addition, a bifurcation result is also given. \\

\noindent{\bf  Keywords} \quad Schr\"{o}dinger system of Hamiltonian type, Periodic potential, Normalized solutions,  Multiplicity. \\
\noindent{\bf 2020 MSC}\quad 35J50; 35J47; 35B40.
\numberwithin{equation}{section}

\tableofcontents

\section{Introduction and main result}
The coupled nonlinear Schr\"{o}dinger system
\begin{equation*}
\left\{\begin{array}{l}
-i \hbar \frac{\partial \phi_1}{\partial t}=\frac{\hbar^2}{2 m} \Delta \phi_1-\phi_1-V(x) \phi_2+f(x, \phi) \phi_2, x \in \mathbb{R}^N, t>0, \\
-i \hbar \frac{\partial \phi_2}{\partial t}=\frac{\hbar^2}{2 m} \Delta \phi_2-\phi_2-V(x) \phi_1+f(x, \phi) \phi_1, x \in \mathbb{R}^N, t>0,
\end{array}\right.
\end{equation*}
arises in some physical problems, especially in the fields of nonlinear optics and Bose-Einstein condensates theory \cite{Ambrosetti-Colorado2007JLMS,Lin-Wei2006JDE,Sirakov-Soares2010TAMS}, where $\phi=\left(\phi_1, \phi_2\right)$, and $\phi_j(t,x)$ for $j=1,2$ represent the wave functions of the states of an electron, $i$ is the imaginary unit, $m$ is the mass of a particle, $\hbar$ is the Planck constant, $V(x)$ is the potential function, and $f$ is the coupled nonlinear function modeling various types of interaction
effects among many particles and satisfies $f(x,e^{i\theta}\phi)=f(x,\phi)$ for $\theta\in[0,2\pi]$. We will
be interested in standing wave solutions of the system, that is, solutions
in the form
$$\phi_1(t, x)=e^{i \omega t} u(x),\ \phi_2(t, x)=e^{i \omega t} v(x).$$
Then, one can see that $z=(u,v)$ satisfies the following elliptic systems 
\begin{equation*}
\left\{\begin{array}{l}
-\varepsilon^2 \Delta u+\lambda u+V(x) v=f(x, z) v ,\ x\in\ \mathbb{R}^N, \\
-\varepsilon^2 \Delta v+\lambda v+V(x) u=f(x, z)u,\ x\in\ \mathbb{R}^N,
\end{array}\right.
\end{equation*}
where $\varepsilon^2=\hbar^2 / 2 m$, $ \lambda=(1+\hbar \omega)$.
\par
In this paper, we focus our attention on the existence and multiplicity of the normalized
solutions for the following Schr\"{o}dinger system of Hamiltonian type
\begin{equation}\label{eq1.1}
\left\{\begin{array}{l}
-\Delta u+V(x)u=\partial_v H(x,u,v)+\omega v,\  x \in \mathbb{R}^N, \\
-\Delta v+V(x)v=\partial_u H(x,u,v)+\omega u,\  x \in \mathbb{R}^N,
\end{array}\right.
\end{equation}
satisfies $\displaystyle
\int_{\R^N}|z|^2dx=a^2$, where $z:=(u,v):\R^N\rightarrow \R\times\R$, $N\geq2$, $\omega\in \R$ appears as a Lagrange multiplier, and $a>0$ is a prescribed constant.
\par
Recently, the problem of finding $L^2(\R^N)$-normalized solutions of Schr\"{o}dinger equation has attracted many attention. This kind of condition is natural, for instance, in quantum mechanics, 
where one looks for particles which have unit charge. When the system \eqref{eq1.1} satisfy certain symmetry conditions, it can be simplified as the following Schr\"{o}dinger equation
\begin{equation}\label{eq1.4}
- \Delta u+V(x) u=f(x,u)+\lambda u, \ u \in H^1\left(\mathbb{R}^N\right).
\end{equation}
In recent years, many researchers have focused on $L^2$-normalized solutions of \eqref{eq1.4}. 
Considering the equation without potential
\begin{equation}\label{eq1.5}
\left\{\begin{array}{l}
-\Delta u+\lambda u=f(u), \\
\displaystyle\int_{\mathbb{R}^N} u^2=a.
\end{array}\right.
\end{equation}
In the pioneering paper\cite{Jeanjean1997NA}, Jeanjean obtained a radial solution at a mountain pass value when the autonomous nonlinearities $f$ with mass supercritical and Sobolev subcritical. Afterward, a multiplicity result was established by Bartsch and de Valeriola in \cite{Bartsch-deValeriola2013AM}. For more
general nonlinearity, see \cite{Jeanjean-Lu2020CVPDE,Bieganowski-Mederski2021JFA,Liu-Zhao2024CVPDE}. If the nonlinearities $f$ in \eqref{eq1.4} depends on $x$, we call it non-autonomous. By imposing some monotonicity conditions to $f$ with respect to $x$, Chen and Tang\cite{Chen-Tang2020JGA} obtained the existence of normalized solutions for general nonlinearities. There are many works  considered the existence and multiplicity of normalized solutions of equation \eqref{eq1.4} under different conditions of potential functions and nonlinearities, one can see \cite{Jeanjean-Luo-Wang2015JDE,Zhong-Zou2023DIE,Ikoma-Miyamoto2020CVPDE,Yang-Qi-Zou2022JGA} and references therein. 
\par
There are some works concerning with the existence of normalized solutions for Schr\"{o}dinger systems. However, these works are all focus on the Schr\"{o}dinger systems of gradient type, in this case the quadratic form appears in the energy functional is positive definite. In \cite{Bartsch-Jeanjean-Soave2016JMPA}, Bartsch, Jeanjean and Soave studied the following  system of coupled elliptic equations
\begin{equation}\label{eq1.6}
\left\{\begin{array}{l}
-\Delta u-\lambda_1 u=\mu_1 u^3+\beta u v^2,\ x\in\R^3, \\
-\Delta v-\lambda_2 v=\mu_2 v^3+\beta u^2 v,\ x\in\R^3, 
\end{array} \right.
\end{equation}
together with the conditions
$$\int_{\mathbb{R}^3} u^2dx=a_1^2 \ \text { and } \  \int_{\mathbb{R}^3} v^2dx=a_2^2,$$
where $a_1$, $a_2$, $\mu_1$, $\mu_2>0$ are fixed quantities. The authors proved the existence of solutions $ (\lambda_1, \lambda_2, u, v)$ of \eqref{eq1.6} which are positive and radially symmetric. Subsequently, the multiplicity of normalized solutions for \eqref{eq1.6} was studied in \cite{Bartsch-Soave2019CVPDE}. In \cite{Noris-Tavares-Verzini2015DCDS}, Noris, Tavares and Verzini studied the existence of standing waves with prescribed $L^2$-mass for the cubic Schr\"{o}dinger system with trapping potentials in $\R^N$ or in bounded domains, where $N\leq3$.
 Other interesting results for the existence and multiplicity of normalized solutions for the Schr\"{o}dinger system \eqref{eq1.6} can be found in \cite{Gou-Jeanjean22018Nonlinearity,Bartsch-Soave2017JFA,Jia-Li-Luo2020DCDS,Lu2020NA} and references therein.
\par
Unlike the papers mentioned above, the energy functional of \eqref{eq1.1} is strongly indefinite since the system is of Hamiltonian type.  The main methods for finding normalized solutions, such as restricting the functional to the sphere in $L^2$ and then studying its critical points on the constrained manifold, seem to be no longer effective. In striking contrast, there are only very few papers deal with the existence of normalized solutions of strongly indefinite variational problems. The first result on this topic seems to be traced back to the work of Buffoni and Jeanjean \cite{Buffoni-Jeanjean1993}. This paper includes the study of the semilinear elliptic equation
\begin{equation}\label{eq1.6++}
-\Delta u+V(x)u=a(x)|u|^{p-2}u+\lambda u, x\in \R^N, \lambda\in\R,
\end{equation}
where the potential $V$ is periodic and the nonnegative function $a(x)\in L^\infty(\R^N)$, $2<p<\frac{2N}{N-2}$ for $N\geq3$ and $p>2$ for $N=1,2$, the given $\lambda$ in equation \eqref{eq1.6++} is located in a prescribed gap of the spectrum of $\mathcal{S}:=-\Delta+V$
but not as a Lagrange multiplier. We refer to \cite{Buffoni-Jeanjean1993} for a related work. In \cite{Heinz-Kupper-Stuart1992JDE}, Heinz, K\"upper and Stuart constructed the test functions required by the abstract bifurcation theorem with the help of the Bloch wave of the periodic Schr\"{o}dinger equation, and then successfully proved that lower or upper end-points of the continuous spectrum are bifurcation points. 
Still considering the case where $\lambda$ is located in the spectral gap of the periodic operator $\mathcal{S}$, Heinz \cite{Heinz1993NA} applied a generalized multiple critical points theorem to obtain infinitely many pairs $(\lambda,\pm u_j)\in\{\lambda\}\times (H^1(\R^N)\setminus\{0\})$ of weak solutions for a class of equations \eqref{eq1.4} with non-autonomous nonlinearity.
In \cite{Buffoni-Esteban2006ANS}, the author used the penalization method inspired by \cite{Esteban-Sere1999} to deal with the normalized solutions of different problems with strongly indefinite structure. Recently, via the idea developed in \cite{Buffoni-Jeanjean1993}, \cite{Esteban-Sere1999} and \cite{Buffoni-Esteban2006ANS}, some progress was made in the study of $L^2$-normalized solutions of some strongly indefinite variational problems. See \cite{Ding-Yu-Zhao2023JGA} and \cite{Yu2023arXiv} for Dirac equations, and \cite{Guo-Yu2024AJMA} for the first order Hamiltonian systems.
\par
However, as far as we known, the existence of $L^2$-normalized solutions of the system \eqref{eq1.1} was not studied before. As mentioned above, motivated by \cite{Buffoni-Jeanjean1993}, \cite{Heinz1993NA}, \cite{Heinz-Kupper-Stuart1992JDE}, and \cite{Ding-Yu-Zhao2023JGA}, Our first aim is to establish the existence of $L^2$-normalized solutions of the system \eqref{eq1.1}. To state our main results, we need the following assumptions for $H(x,u,v)$:
\begin{itemize}
\item[$(H_0)$]  $H(x,z)=G(x,|z|):=\displaystyle\int_0^{|z|} g(x,t)tdt$,~$\forall~ z:=(u,v) \in \mathbb{R} \times \mathbb{R}$;
\item[$(H_1)$]  $g(x,\cdot) \in C^1(\mathbb{R}^{+}, \mathbb{R}^{+})$ and~$ g(x,0)=0$ for any $x\in\R^N$; 
\item[$(H_2)$] there exist $p,q\in\R$ with $2<p\leq q<  {N}$ such that for almost all $x\in\R^N$, $\displaystyle\frac{g(x,t)}{t^{q-2}}$(resp. $\displaystyle\frac{g(x,t)}{t^{p-2}}$) is nonincreasing (resp. nondecreasing) with respect to $t$ on $(0,\infty)$;
\item[$(H_3)$] $K(x):=g(x,1)\in L^\infty(\R^N)$ and $\lim\limits_{R\rightarrow\infty}\mathop{\text{ess sup}}\limits_{|x|\geq R} K(x)=0$;
\item[$(H_4)$] There exist constants $L,t_0>0$, $\alpha\in(0,2+\frac{4}{N})$ and  $\tau\in(0,\frac{4-N(\alpha-2)}{2})$ such that
$$
 G(x,t)\geq L|x|^{-\tau}t^\alpha \text { a.e. on } S \text { and }0\leq t\leq t_0,
$$
\end{itemize}
where $S=\{tx \mid t \geq 1$ and $ x \in B_d(x_0)\}$ for some $x_0 \in \R^N$ and $0<d<|x_0|$.

The potential $V(x)$ is assumed as follows:
\begin{itemize}
\item[$(V_1)$] $V(x)\in L^{\infty}(\R^N)$ is $1$-periodic in each $x_i$ for $i=1,\cdots,N$ and $m:=\min\limits_{x\in\R^N}V(x)>0$.
\end{itemize}

As a consequence of assumptions $(H_0)$-$(H_4)$, for almost $x\in\R^N$ and $t>0$, we know that 
\begin{equation}\label{eqg_1}
0<(p-2)g(x,t)\leq g^\prime(x,t) t\leq(q-2) g(x,t),
\end{equation}
where $g^\prime(x,t)=\displaystyle\frac{\partial g(x,t)}{\partial t} $.
This then indicates that
\begin{equation}\label{eqg_2}
\frac{1}{q}g(x,t)t^2\leq G(x,t)\leq \frac{1}{p}g(x,t)t^2,
\end{equation}
and 
\begin{equation}\label{eqg_3}
\begin{split}
s^pG(x,t)&\leq G(x,st)\leq s^q G(x,t) \ \text {for}\  s\geq 1,\\
s^qG(x,t)&\leq G(x,st)\leq s^p G(x,t) \ \text {for}\ 0<s\leq 1.
\end{split}
\end{equation}
We now apply $\eqref{eqg_1}$-$\eqref{eqg_3}$ to conclude that there exists $C>0$ such that 
\begin{equation}\label{eqg_4}
G(x,t)\leq C(|t|^p+|t|^q),  \ \text{for all}\ t\in \R^+ \ \text{and}\ a.e.\  x\in \R^N.
\end{equation}

Before presenting the main results of this paper, let's review the following definitions of bifurcation point. 
\begin{Def}\label{Def1.1}
The number $\omega_0$ is said to be a bifurcation point of system \eqref{eq1.1} if there exists
$$\{(\omega_n,z_n)\}\subset \bigg\{(\omega,z)\in \R \times H^1(\R^N,\R^2)\mid z\neq0 \ \text{and}\ Tz=g(x,|z|)z+\omega z\bigg\}$$
such that $\omega_n<\omega_0$ for all $n\in \mathbb{N}$, $\omega_n\rightarrow\omega_0$, $\lim\limits_{n\rightarrow\infty}\|z_n\|_{H^1}=0$, where the definition of $T$ is provided in Sect.2.
\end{Def}
\begin{Thm}\label{Thm1.1}
   Let $a>0 $ small enough, if $(V_1)$ and $(H_0)$-$(H_4)$ hold, then there is a solution $\left(\omega_a, z_a\right)$ of system \eqref{eq1.1} such that
    \begin{equation*}
        \int_{\R^N}|z_a|^2dx=a^2.
        \end{equation*}
Moreover,
    $\omega_a$ is a bifurcation point of system \eqref{eq1.1}.
\end{Thm}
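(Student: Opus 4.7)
The plan is to adapt the spectral-gap perturbation and reduction scheme of Buffoni--Jeanjean, Heinz--K\"upper--Stuart and Ding--Yu--Zhao to the present Hamiltonian system. Writing \eqref{eq1.1} compactly as $Tz-\omega z=g(x,|z|)z$ with $T(u,v)=(\mathcal{S}v,\mathcal{S}u)$ and $\mathcal{S}:=-\Delta+V$, and diagonalising via $z^{\pm}:=(u\pm v)/\sqrt{2}$, one sees that $T$ acts as $\mathcal{S}\oplus(-\mathcal{S})$ on $H^{1}(\R^{N})\oplus H^{1}(\R^{N})$. Hence its spectral gap is exactly $(-m^{*},m^{*})$ with $m^{*}:=\inf\sigma(\mathcal{S})\ge m>0$, the expected bifurcation value is $\omega_{0}=m^{*}$, and the strategy is to treat $\varepsilon:=\omega_{0}-\omega>0$ as a small parameter and produce a small solution $z_{\omega}$ whose $H^{1}$-norm vanishes as $\varepsilon\to 0^{+}$.

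The first reduction is to eliminate the negative subspace. With $E:=H^{1}(\R^{N})\times H^{1}(\R^{N})=E^{+}\oplus E^{-}$ where $E^{\pm}:=\{(\varphi,\pm\varphi)\}$, the quadratic part of the energy $\Phi_{\omega}$ equals $\langle(\mathcal{S}-\omega)z^{+},z^{+}\rangle-\langle(\mathcal{S}+\omega)z^{-},z^{-}\rangle$. Since $(H_{1})$--$(H_{3})$ together with \eqref{eqg_1} make $H(x,\cdot)$ strictly convex, for each $z^{+}$ the map $z^{-}\mapsto\Phi_{\omega}(z^{+}+z^{-})$ admits a unique maximiser $h_{\omega}(z^{+})$, and the reduced functional $J_{\omega}(z^{+}):=\Phi_{\omega}(z^{+}+h_{\omega}(z^{+}))$ is $C^{1}$ on $E^{+}$, whose nontrivial critical points are in bijection with nontrivial solutions of \eqref{eq1.1}.

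Next I would carry out a Lyapunov--Schmidt reduction in the near-singular direction inside $E^{+}$. Because $\mathcal{S}$ has purely absolutely continuous spectrum, there is no honest eigenfunction at $m^{*}$; instead, following Heinz--K\"upper--Stuart, construct an approximate kernel $V_{\varepsilon}$ of low-lying modes (e.g.\ $V_{\varepsilon}:=\mathrm{Range}\,\chi_{[m^{*},m^{*}+\varepsilon^{1/2})}(\mathcal{S})$ or a Bloch-wave substitute), split $E^{+}=V_{\varepsilon}\oplus V_{\varepsilon}^{\perp}$, and note that $\mathcal{S}-\omega$ is bounded below on $V_{\varepsilon}^{\perp}$ by a quantity $\sim\varepsilon^{1/2}$. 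After a rescaling $z=\mu\widetilde{z}$ with $\mu=\mu(\varepsilon)\to 0$ chosen to balance the quadratic and nonlinear terms, the orthogonal equation can be solved by a contraction mapping, giving a smooth map $\phi\mapsto w(\phi)$; substitution yields a finite- (or low-) dimensional variational problem on $V_{\varepsilon}$. Hypothesis $(H_{4})$ — the mass-subcritical polynomial lower bound $G(x,t)\ge L|x|^{-\tau}t^{\alpha}$ with $\alpha<2+4/N$ on the cone $S$ — supplies the test-function geometry (mountain pass on $V_{\varepsilon}$ or minimisation on a Nehari-type set) needed to produce a nontrivial critical point, hence a small solution $z_{\omega}$ for every $\omega\in(\omega_{0}-\delta,\omega_{0})$.

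Finally, to hit the prescribed $L^{2}$-mass and to conclude bifurcation, I would track $\mu(\varepsilon)$ through the rescaling to show that $\omega\mapsto\|z_{\omega}\|_{2}$ is continuous on $(\omega_{0}-\delta,\omega_{0})$ and tends to $0$ as $\omega\uparrow\omega_{0}$; an intermediate-value argument then yields, for every sufficiently small $a>0$, some $\omega_{a}\in(\omega_{0}-\delta,\omega_{0})$ with $\|z_{\omega_{a}}\|_{2}=a$. The bifurcation statement follows at once because $\omega_{a}\uparrow\omega_{0}$ and $\|z_{\omega_{a}}\|_{H^{1}}\to 0$ as $a\to 0^{+}$. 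The main obstacle is the purely continuous spectrum of $\mathcal{S}$: the space $V_{\varepsilon}$ is only an \emph{approximate} kernel, so the Lyapunov--Schmidt remainder must be controlled uniformly in $\varepsilon$, and the translation-invariance of the periodic potential must be broken strongly enough to recover compactness of the reduced problem. It is precisely the weight $|x|^{-\tau}$ in $(H_{4})$ together with the decay condition in $(H_{3})$ on $K(x)$ that break this invariance and allow the min-max/minimisation step on $V_{\varepsilon}$ to close.
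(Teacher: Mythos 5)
Your overall skeleton (rewrite the system as $Tz=g(x,|z|)z+\omega z$, eliminate the negative subspace by a concavity/maximisation argument, build test functions from truncated almost-periodic Bloch waves at the band edge, and locate the bifurcation at the bottom of the gap) is the same as the paper's. But the mechanism by which you propose to satisfy the constraint $\|z\|_2=a$ is different from the paper's and contains the essential gap. You fix $\omega$ near the band edge, produce a solution $z_\omega$ for each such $\omega$ by a Lyapunov--Schmidt/min-max argument, and then invoke an intermediate-value argument on $\omega\mapsto\|z_\omega\|_2$. Min-max constructions do not yield a well-defined, let alone continuous, branch $\omega\mapsto z_\omega$: solutions are not unique, and since $\sigma(\mathcal{S})$ is purely essential there is no simple eigenvalue from which a continuous bifurcating branch could be extracted. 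Without continuity of $\|z_\omega\|_2$ in $\omega$ the intermediate-value step collapses, and with it the existence of $\omega_a$ with $\|z_{\omega_a}\|_2=a$. A secondary problem is that your ``approximate kernel'' $V_\varepsilon=\mathrm{Range}\,\chi_{[m^*,m^*+\varepsilon^{1/2})}(\mathcal{S})$ is an infinite-dimensional spectral subspace of an absolutely continuous operator, so the claim that substitution yields a finite- or low-dimensional variational problem is not correct, and the uniform contraction estimates needed to solve the orthogonal equation are asserted rather than obtained.

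The paper avoids both difficulties by never treating $\omega$ as the primary parameter. It fixes the mass $a$ from the outset, restricts to the open set $W=\{w\in E^+:\|w\|<\sqrt{m+1}\,\|w\|_2\}$, shows that for $\|w\|_2$ small the functional $J_0$ has a unique maximiser $\Phi(w)$ on the fibre $M(w)$ (your first reduction, but localised to $W$ so that the $L^2$- and $H^1$-norms are comparable), and then \emph{minimises} the reduced functional $I=J_0\circ\Phi$ over $W\cap S_a$. The truncated Bloch-wave test functions give the strict level estimate $c_a<ma^2/2$; the decay of $K$ in $(H_3)$ gives the Palais--Smale condition below that level; Ekeland's principle produces a minimising $(PS)_{c_a}$-sequence; and the multiplier $\omega_a$ is recovered as the limit of the perturbation functional $\kappa(\Phi(w_n))$, with the two-sided bound $m-Ca^{p-2}\le\omega_a<m$ yielding both $\omega_a\to m$ and $\|z_a\|\to0$, i.e.\ the bifurcation statement. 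If you want to salvage your route you would need to construct a genuinely continuous solution branch in $\omega$, which is precisely what the constrained-minimisation-plus-Lagrange-multiplier scheme is designed to circumvent.
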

\par
Now, we turn around to the multiplicity of $L^2$-normalized solutions of the system \eqref{eq1.1}. Up to now, we can only deals with a special case that  $u=v$ since we meet a technical obstacle in choosing test functions. In this case, one can transform the system \eqref{eq1.1} into the single classical Schr\"{o}dinger equation
\begin{equation}\label{eq1.2+}
-\Delta u +V(x)u=g(x,|u|)u+\omega u, x\in\R^N,
\end{equation}
which is a general form of \eqref{eq1.6++}.
As previously mentioned in \cite{Buffoni-Esteban2006ANS}, Buffoni, Esteban and S\'{e}r\'{e} obtained the existence result but did not establish multiplicity of the normalized solutions of \eqref{eq1.6++}. It can be easily verified that $g(x,t)=a(x)|u|^{p-2}$ satisfies conditions $(H_0)$-$(H_4)$. Combined with the construction in \eqref{eq3.15},  Theorem \ref{Thm1.1} we obtained is more or less a generalization of the work of Buffoni and Esteban \cite{Buffoni-Esteban2006ANS}. On the other hand, it is a natural question to ask that equation \eqref{eq1.2+} has multiple $L^2$-normalized solutions or not? However, to the best of our knowledge, there appears to be no corresponding result for the multiplicity of the normalized solutions of \eqref{eq1.2+}. So the second aim of this paper is to obtain the multiplicity of $L^2$-normalized solutions of \eqref{eq1.2+}. We will show that variational arguments in the spirit of \cite{Ding-Yu-Zhao2023JGA,Guo-Yu2024AJMA} can be used to obtain the multiplicity of $L^2$-normalized solutions of \eqref{eq1.2+} as follows.
\begin{Thm}\label{Thm1.2}
Let $a>0 $ small enough, if $(V_1)$ and $(H_0)$-$(H_4)$ hold, there is a sequence $\{(\omega_a^i,u_a^i)\}_{i=1}^\infty$ of
distinct solutions of equation \eqref{eq1.2+} such that
    \begin{equation*}
    \int_{\R^N}|u_a^i|^2dx=a^2.
\end{equation*}
Moreover,
    $\omega_a^1, \omega_a^2,\cdots $  are bifurcation points on the left of equation \eqref{eq1.2+}.
\end{Thm}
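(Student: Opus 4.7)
The plan is to carry out, for the scalar equation \eqref{eq1.2+}, an equivariant version of the Lyapunov--Schmidt/perturbation scheme used in Theorem \ref{Thm1.1}, and then extract infinitely many critical points via Krasnoselski genus. Reducing to $u=v$ eliminates the strongly indefinite Hamiltonian structure of \eqref{eq1.1}, so the relevant object is the standard Schr\"odinger functional
$$\Phi_\omega(u) \;=\; \frac{1}{2}\int_{\R^N}\bigl(|\nabla u|^2 + V(x)u^2 - \omega u^2\bigr)dx \;-\; \int_{\R^N} G(x,|u|)\,dx,$$
which is $C^1$ on $H^1(\R^N)$ and, by $(H_0)$, invariant under $u\mapsto -u$. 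For $\omega$ below $\omega_0:=\inf\sigma(-\Delta+V)$, the quadratic part is coercive, which is the classical setting for the symmetric mountain-pass / Ljusternik--Schnirelmann theorem.

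For each fixed $\omega\in(\omega_0-\delta,\omega_0)$, I would define the min-max levels
$$c_i(\omega) \;=\; \inf_{A\in\Gamma_i}\,\sup_{u\in A}\Phi_\omega(u),\qquad i=1,2,\dots,$$
with $\Gamma_i$ the class of closed, symmetric subsets of a Nehari-type submanifold of $H^1(\R^N)\setminus\{0\}$ with Krasnoselski genus $\geq i$. Two things need to be secured: a Palais--Smale-type condition, supplied by the decay $(H_3)$ of $K(x)=g(x,1)$ at infinity, which breaks the $\Z^N$-translation symmetry coming from the periodic $V$ and restores compactness; and an $i$-dimensional family of admissible test functions realizing $c_i(\omega)$ below the PS threshold, built by rescaling/translating inside the cone $S$ provided by $(H_4)$, as in the single-solution construction of Theorem \ref{Thm1.1}. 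The symmetric deformation lemma then produces critical points $u_i^\omega \in \Phi_\omega^{-1}(c_i(\omega))$.

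Next I would analyze the dependence of $u_i^\omega$ on $\omega$. Arguing as in the perturbation analysis behind Theorem \ref{Thm1.1}, $\|u_i^\omega\|_{H^1}\to 0$ as $\omega\to\omega_0^-$, while a quantitative lower bound, again coming from $(H_4)$, keeps $\|u_i^\omega\|_2>0$ throughout the admissible window. Continuity of $\omega\mapsto \|u_i^\omega\|_2$ together with an intermediate-value argument furnishes, for each $i$ and each sufficiently small $a>0$, a parameter $\omega_a^i<\omega_0$ with $\|u_i^{\omega_a^i}\|_2 = a$; setting $u_a^i:=u_i^{\omega_a^i}$ gives a normalized solution of \eqref{eq1.2+}. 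Since $\omega_a^i\to\omega_0$ and $\|u_a^i\|_{H^1}\to 0$ as $a\to 0$, each $\omega_a^i$ qualifies as a bifurcation point on the left in the sense of Definition \ref{Def1.1}.

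The main obstacle I expect is twofold. First, genuine distinctness of $u_a^1,u_a^2,\dots$: genus theory only outputs an ordered sequence of min-max levels, which can coincide, and in the presence of the $\Z^N$-translation pseudosymmetry coming from periodic $V$ one must be careful not to count translates or sign-reversed copies of the same profile. The decay of $K$ in $(H_3)$ has to be used quantitatively here, pinning down a preferred location for solutions and forcing distinct genus levels to correspond to geometrically different profiles. Second, the uniform control of the Lyapunov--Schmidt reduction simultaneously in $i$ and $\omega$, which is what allows one to invert $\omega\mapsto\|u_i^\omega\|_2$ for every $i$ at once, is the most delicate quantitative step and is very likely the reason the theorem restricts to small $a$.
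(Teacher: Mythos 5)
Your route is genuinely different from the paper's: you fix the frequency $\omega$ below the spectrum, solve the free problem by symmetric min--max on a Nehari manifold, and then try to tune $\omega$ so that the mass equals $a^2$. The paper instead never fixes $\omega$: it keeps the constrained formulation of Section 3 (the reduction $I=J_0\circ\Phi$ on $X_a$, with $\omega$ recovered as the limit of the perturbation term $\kappa(\Phi(w_n))$), defines the genus levels $b_k=\inf_{\gamma(A)\geq k}\sup_A I$ on $X_a$, and shows $b_k<\frac{ma^2}{2}$ for every $k$ (Lemma 4.6), which is below the Palais--Smale threshold of Lemma 3.8; the Ljusternik--Schnirelmann theorem of \cite{jeanjean1992} then yields infinitely many constrained critical points at once, all with $\|u\|_2=a$ by construction. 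Your worry about distinctness is actually handled automatically by standard genus theory (coinciding levels force the critical set to have genus $\geq 2$, hence to be infinite), so that is not where the difficulty lies.

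The genuine gaps in your proposal are elsewhere. First, the step ``continuity of $\omega\mapsto\|u_i^\omega\|_2$ together with an intermediate-value argument'' is not available: min--max critical points are not unique and need not depend continuously on $\omega$; at best one controls the level function $c_i(\omega)$ (monotone, hence differentiable a.e., with $c_i'(\omega)=-\tfrac12\|u_i^\omega\|_2^2$ where it exists), which yields prescribed mass only for almost every $a$, not for every small $a$. Avoiding precisely this obstruction is the reason the paper works on the constraint from the start. Second, your assertion that $\|u_i^\omega\|_{H^1}\to 0$ as $\omega\to\omega_0^-$ \emph{for every} $i$ is exactly the nontrivial content that must be proved, not assumed: since $\omega_0$ is the edge of the essential spectrum (not an eigenvalue), one must exhibit, for each $k$, a $k$-dimensional symmetric family on which the quadratic form is within $o(1)$ of $m\|\cdot\|_2^2$ while the nonlinear term stays bounded below. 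This is the content of the paper's Lemmas 4.2--4.5, built from the maps $P_n\zeta(x)=n^{-N/2}M(|\psi|^2)^{-1/2}\psi(x)\zeta(x/n)$ with $\zeta$ ranging over the span of the first $k$ Hermite functions and $\psi$ an almost-periodic Bloch wave, together with the mean-value Proposition 3.1 and the injectivity statement of Lemma 4.2. Your sketch gestures at ``rescaling/translating inside the cone $S$'' but does not produce such families nor the uniform (in $\zeta\in Z^1$) estimates of Lemma 4.4(iii), and without them the higher levels $c_i(\omega)$ need not collapse to zero at the spectral edge, so neither the normalization nor the bifurcation claim for $i\geq 2$ follows.
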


This paper is organized as follows. In sect.2, we introduce the variational framework which is different from the usual periodic Schr\"{o}dinger systems, one can see \cite{Zhao-Zhao-Ding2011ZAMP}. With the help of a block matrix $\mathcal{J}$, we can formally write system \eqref{eq1.1} as a Hamiltonian system and establish the required variational framework based on the spectrum of the matrix operator $T$. In Sect.3, we constructed a sequence of test functions $\{z_k\}\subset L^2$ for system \eqref{eq1.1} and provided some of its properties. In addition, we performed a reduction of $J_0$ which leaded to a new functional $I$ and constructed a perturbed functional $\kappa$ to ensure the existence of parameter $\omega_a$. The section concludes with the proof of the Palais-Smale condition for reduced energy functional $I$ and Theorem \eqref{Thm1.1}. Due to the special construction of the test functions $\{z_k\}$, we turn to studying the single period Schr\"{o}dinger equation \eqref{eq1.2+} in sect.4. The main purpose of this section is to use the multiplicity theorem of the Ljusternik-Schnirelmann type to prove Theorem \eqref{Thm1.2}.

\section{The variational framework}
Throughout the paper, $c$, $c_i$, $C$ or $C_i$ $(i=1,2,\cdots)$ are some positive constants may change from line to line. For $1\leq q\leq\infty$, $L^q:=L^q(\R^N,\R^2)$ and $\|\cdot\|_{q}$ denotes the usual norm of the space $L^q$; $z_n\rightharpoonup z$ and $z_n\rightarrow z$ mean the weak and strong convergence, respectively,
 as $n\rightarrow\infty$. In addition, for $r>0$ and $x\in\R^N$, we denote by $B_r(x)$ the open ball in $\R^N$ of radius $r$ centered at $x$, and let $B_r=B_r(0)$. 
 
First of all, we rewrite \eqref{eq1.1} in order to establish the variational framework.  Set
$$
\mathcal{J}=\left(\begin{array}{ll}
0 & I \\
I & 0
\end{array}\right)\ \text { and }
T:=\mathcal{J}\mathcal{S} =\left(\begin{array}{ll}
\ \ \ \  0 & -\Delta+V \\
-\Delta+V & \ \ \ \  0
\end{array}\right).
$$
Then \eqref{eq1.1} can be read as
\begin{equation}\label{eq2.1}
Tz=g(x,|z|)z+\omega z\ \text{for}\  z=(u,v).
\end{equation}
Under $(V_1)$, $T$ is selfadjoint on $L^2$ with domain  
$$\mathcal{D}(T)=H^2:=H^2(\R^N,\R^2).$$
Denote by $\sigma(T)$ and $\sigma_{\text{ess}}(T)$ the spectrum and the essential spectrum of the operator $T$, respectively. As demonstrated in \cite[Lemma 2.3]{Zhao-Ding2010JDE}, there is the following Lemma.
\begin{Lem}\label{Lem2.1}
Let $(V_1)$ be satisfied, then
\begin{itemize}
\item[$(1)$]  $\sigma(T)=\sigma_{\text{ess}}(T)$, i.e., $T$ has only essential spectrum;
\item[$(2)$]  $\sigma(T)\subset\R\setminus(-m,m)$ and $\sigma(T)$ is symmetric with respect to origin.
\end{itemize}
\end{Lem}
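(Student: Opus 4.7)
The plan is to exploit the block structure $T=\mathcal{J}\mathcal{S}$ and reduce everything to spectral properties of the single scalar periodic Schr\"odinger operator $A:=-\Delta+V$ acting on $L^{2}(\R^{N},\R)$. The transfer mechanism consists of two elementary algebraic identities: a squaring identity that pushes the coercivity gap of $A$ across to $T$, and a unitary conjugation that forces $\sigma(T)$ to be symmetric about the origin.

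\medskip

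Concretely, I would first observe that $\mathcal{J}^{2}=I$ and that $\mathcal{J}$ commutes with the diagonal operator $\mathcal{S}=\mathrm{diag}(A,A)$, which gives
\[
T^{2}=(\mathcal{J}\mathcal{S})^{2}=\mathcal{J}^{2}\mathcal{S}^{2}=\mathrm{diag}(A^{2},A^{2}).
\]
Introducing the unitary involution $U(u,v):=(u,-v)$, a direct calculation with $T(u,v)=(Av,Au)$ yields $UTU=-T$, so $T$ is unitarily equivalent to $-T$ and in particular $\sigma(T)=-\sigma(T)$.

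\medskip

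For part (2), under $(V_{1})$ the quadratic form of $A$ satisfies $\langle Au,u\rangle\geq m\|u\|_{2}^{2}$, hence $\sigma(A)\subset[m,\infty)$ and $\sigma(A^{2})\subset[m^{2},\infty)$. The spectral mapping theorem applied to $T^{2}$ together with the first identity then gives
\[
\{\lambda^{2}:\lambda\in\sigma(T)\}=\sigma(T^{2})=\sigma(A^{2})\subset[m^{2},\infty),
\]
which forces $\sigma(T)\subset\R\setminus(-m,m)$ and combines with the symmetry above to complete (2). For part (1), I would invoke the classical fact that, under $(V_{1})$, the scalar periodic operator $A$ has purely absolutely continuous spectrum (Floquet-Bloch decomposition), so $\sigma(A)=\sigma_{\text{ess}}(A)$. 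Given $\lambda\in\sigma(T)$, set $\mu:=|\lambda|\in\sigma(A)$ and pick a singular Weyl sequence $(u_{n})$ for $A$ at $\mu$: $\|u_{n}\|_{2}=1$, $u_{n}\rightharpoonup 0$, $(A-\mu)u_{n}\to 0$. Defining
\[
z_{n}^{+}:=\tfrac{1}{\sqrt{2}}(u_{n},u_{n}),\qquad z_{n}^{-}:=\tfrac{1}{\sqrt{2}}(u_{n},-u_{n}),
\]
both sequences are normalized in $L^{2}$, converge weakly to zero, and a one-line computation gives $(T-\mu)z_{n}^{+}\to 0$ and $(T+\mu)z_{n}^{-}\to 0$. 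Weyl's criterion then places $\pm\mu$ in $\sigma_{\text{ess}}(T)$, so $\lambda\in\sigma_{\text{ess}}(T)$.

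\medskip

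The algebraic manipulations are routine; the genuine input is the absence of isolated eigenvalues of finite multiplicity for the scalar periodic operator $A$. For bounded periodic $V$ this is the classical absolute-continuity result established via Floquet-Bloch theory, so I would either cite a standard reference or sketch the Bloch-wave construction used in \cite{Heinz-Kupper-Stuart1992JDE}. Modulo that classical input, the whole lemma reduces to the two block-algebra identities $T^{2}=\mathrm{diag}(A^{2},A^{2})$ and $UTU=-T$.
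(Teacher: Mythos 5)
Your proposal is correct. Note first that the paper does not actually prove this lemma: it simply quotes it from \cite{Zhao-Ding2010JDE} (Lemma 2.3 there), so you are being compared against a citation rather than an argument. Your two algebraic identities do the real work and are both verified easily: $T(u,v)=(Av,Au)$ gives $T^{2}=\mathrm{diag}(A^{2},A^{2})$ (the commutation of $\mathcal{J}$ with $\mathcal{S}$ holds precisely because both diagonal blocks of $\mathcal{S}$ are the \emph{same} operator $A$), and the involution $(u,v)\mapsto(u,-v)$ anticommutes with $T$, giving the symmetry of $\sigma(T)$. For (2), the spectral mapping theorem plus $A\geq m$ is exactly right; the only point worth making explicit is that deducing $|\lambda|\in\sigma(A)$ from $\lambda^{2}\in\sigma(A^{2})$ uses $\sigma(A)\subset[0,\infty)$ so that squaring is injective on $\sigma(A)$ --- which you implicitly handle by setting $\mu=|\lambda|$. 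For (1), your transfer of a singular Weyl sequence from $A$ at $\mu$ to $T$ at $\pm\mu$ via $z_{n}^{\pm}=\tfrac{1}{\sqrt{2}}(u_{n},\pm u_{n})$ is correct and clean. The one substantive difference from the standard proof in this literature (the one in the cited Zhao--Ding reference) is the external input for (1): you invoke absolute continuity of the periodic scalar operator (Thomas/Floquet--Bloch), whereas the usual argument is more elementary and works directly with $T$ --- if $\lambda$ were an isolated eigenvalue of finite multiplicity, its eigenspace would be a finite-dimensional subspace of $L^{2}$ invariant under the $\Z^{N}$-translation group, which is impossible. Your route is a correct (if slightly heavier) alternative; the translation-invariance argument has the advantage of not needing any Floquet theory. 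Minor cosmetic point: your $U$ clashes with the $U$ of the polar decomposition $T=U|T|$ already used in Section 2, so rename it if this were to be inserted into the paper.
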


Let $\{E_\lambda\}_{\lambda \in \mathbb{R}}$ be the spectral family of the $T$. According to \cite[Chapter 4, Theorem 3.3]{Edmunds-Evans1987}, $T$ has a polar decomposition 
$$T=U|T|=|T|U,$$
where $U=I-2E_0$ and $|T|$ denotes the absolute value of $T$. As a consequence of Lemma \ref{Lem2.1}, $L^2$ possesses an orthogonal decomposition 
$$
L^2=L^{-} \oplus L^{+},\ \text { where } L^{ \pm}=\left\{z \in L^2 \mid U z= \pm z\right\}.
$$
In order to seek for solutions for \eqref{eq1.1}, we denote by $E:=\mathscr{D}(|T|^{\frac{1}{2}})$ with the inner product
$$
\langle z_1, z_2\rangle:=\left( |T|^{\frac{1}{2}} z_1, |T|^{\frac{1}{2}} z_2\right)_{2}\  \text{for any}\ z_1,z_2\in E,
$$
where $(\cdot,\cdot)_2$ stands for the usual inner product in $L^2$, and $|T|^{\frac{1}{2}}$ denotes the square root of $T$. For any $z\in E$, the induced norm $\|z\|^2:=\langle z, z\rangle$. Clearly, $E$ is a Hilbert space. 
Moreover, by the complex interpolation theory which can see \cite{TriebelBook1978}, we know that 
$$
E=\left[H^2\left(\mathbb{R}^N, \mathbb{R}^2\right), L^2\right]_{\frac{1}{2}}=H^1\left(\mathbb{R}^N, \mathbb{R}^2\right).
$$
The orthogonal decomposition to $L^2$ induces the following associated  decomposition:
$$
E=E^{+} \oplus E^{-},\  \text { where } E^{ \pm}=H^1\left(\mathbb{R}^N, \mathbb{R}\right)\cap L^{ \pm}.
$$

Using the polar decomposition and self-adjointness of $T$, for any $z^+\in E^+\subset L^+$ and $z^-\in E^-\subset L^-$, there holds 
$$(z^+,z^-)_2=0\ \text{and}\ \langle z^+,z^-\rangle=0,$$
and we conclude that 
\begin{equation}\label{eq2.2}
(Tz,z)_2=\|z^+\|^2-\|z^-\|^2.
\end{equation}

Recall that if $z=(u, v) \in E$ is a weak solution of system \eqref{eq1.1}, then there holds
$$
\int_{\mathbb{R}^N}(\nabla u\nabla\eta_2+V(x)u \eta_2+\nabla v \nabla \eta_1+V(x)v \eta_1)dx-\omega\int_{\mathbb{R}^N}
z\eta dx -\int_{\mathbb{R}^N} g(x,|z|)z\eta dx =0
$$
for any $\eta=(\eta_1, \eta_2) \in E$. The functional associated with system \eqref{eq1.1} is
\begin{equation}\label{eq2.3}
J_\omega(z)=J_\omega(u, v)=\int_{\mathbb{R}^N}(\nabla u \nabla v+V(x)u v)dx-\frac{\omega}{2}\int_{\mathbb{R}^N} |z|^2dx-\Psi(z),
\end{equation}
where
$$\Psi(z)=\int_{\R^N}G(x,|z|)dx.$$
\eqref{eq2.2} implies that the functional $J_{\omega}$ defined in \eqref{eq2.3} can be rewritten in a standard way
\begin{equation}\label{eq2.2+}
J_\omega(z)=\frac{1}{2}\left(\left\|z^{+}\right\|^2-\left\|z^{-}\right\|^2\right)-\frac{\omega}{2}\int_{\mathbb{R}^N} |z|^2dx-\Psi(z)
\end{equation}
for $z=z^++z^-\in E$.
It follows by standard arguments that $J_\omega\in C^2(E,\R)$. In addition, it can be understood that a solution of \eqref{eq1.1} with $\|z\|_2=a$ can be obtained as a constrained critical point of the functional
$$J_0(z)=\frac{1}{2}\left(\left\|z^{+}\right\|^2-\left\|z^{-}\right\|^2\right)-\Psi(z)$$
on the constraint
$$S_a=\{z\in E\mid\ \displaystyle\int_{\R^N}|z|^{2}dx=a^2\},$$
where $a>0$ is a prescribed constant. Furthermore, due to $\sigma(T)\subset\mathbb{R} \backslash(-m, m)$, one has
\begin{equation}\label{eq2.5}
m\|z\|^2_{2}\leq\|z\|^2 \ \text {for all}\  z\in E.
\end{equation}

\section{Existence of normalized solutions}
%%%%%%%%%%%%%%%%%%%%%%%%%%%%%%%%%%%%%%%%%%%%%%%%%%%%%%%%%%%%%%%%%%%%%%%%%%%%%%
In this section, we will introduce the following set introduced in \cite{Buffoni-Jeanjean1993} to overcome the difficulty that the norm $\|\cdot\|_2$ and $\|\cdot\|$ are not equivalent on $E^+$, i.e.
\begin{equation}\label{eq3.1}
W:=\{w\in E^+\mid\|w\|<\sqrt{m+1}\|w\|_{2}\}.
\end{equation}
It is clear that $W$ is an open subset of $E^+$.
For $w\in E^+$ with $w\neq0$, define
\begin{equation*}
M(w):=\left\{z\in E\mid  \|z\|_{2}=\|w\|_{2}, \
z^+=\frac{\|z^+\|_{2}}{\|w\|_{2}}w\ \ \mbox{and} \ \ \|z^-\|\leq
\frac{\sqrt{m}\|w\|_{2}}{2} \right\}.
\end{equation*}
For the approximate image of $M(w)$, the interested reader can see \cite[Figure 1]{Yu2023arXiv}. In the following proof, we always assume that $(V_1)$ and $(H_0)$-$(H_4)$ holds.

\subsection{The test-functions for \eqref{eq1.1}}

Consider the following eigenvalus problem of elliptic system:
\begin{equation}\label{EQ}
\begin{aligned}
&\left\{\begin{array}{l}
-\Delta u+V(x)u=\lambda v,\ x\in \R^N,\\
-\Delta v+V(x)v=\lambda u,\ x\in \R^N.
\end{array}\right.\\
\end{aligned}
\end{equation}
Let $u=v$, then \eqref{EQ} may be written as 
\begin{equation}\label{SEQ}
-\Delta u+V(x)u=\lambda u,\ x\in \R^N.
\end{equation}
According to \cite{Eastham-1973}, for all $\lambda\in\sigma(\mathcal{S})$, \eqref{SEQ} exists a non-trivial solution $\psi\in H_{loc}^2(\R^N)\cap C^1(\R^N)$  such that $\psi:\R^N\rightarrow\R$ is uniformly almost-periodic in the sense of Bisicovich \cite{Besicovitch-1955}. 
 
We denote $\nu(x):=(\psi,\psi)$ be a solution of $Tz=\lambda z$. Since $\nu(x)\notin L^2$, we will use a truncation
arguments from \cite{Heinz-Kupper-Stuart1992JDE} to get the desired sequence. For any $k>0$ and $x\in\R^N$, we set
\begin{equation}\label{eq3.15}
z_k(x)=k^{-\frac{N}{2}}\eta\bigg(\frac{x}{k}\bigg) \nu(x)=\bigg(k^{-\frac{N}{2}}\eta\displaystyle\bigg(\frac{x}{k}\bigg)\psi,k^{-\frac{N}{2}}\eta\bigg(\frac{x}{k}\bigg)\psi\bigg)=:(u_k,u_k),
\end{equation}
where $\eta \in C_0^{\infty}(\mathbb{R}^N)$ with $\eta \geq 0$ on $\mathbb{R}$ and
$$
\eta(x)= \begin{cases}1, & |x| \leq R_0, \\ 0, & |x| \geq R_0+1\end{cases}
$$
with $|x_0|<R_0$.

\begin{Pro}\label{Proposition 3.1}\rm{(\cite[Proposition 3.1]{Heinz-Kupper-Stuart1992JDE})}
Let $f:\R^N\rightarrow \mathbb{C}$ be a uniformly almost-periodic function and let $g\in L^1(\R^N)$. Then 
$$\lim_{T\rightarrow\infty}\int_{\R^N}f(Tx)g(x)dx=M(f)\int_{\R^N}g(x)dx,$$
where $M(f)$ denotes the mean-value of the function $f$ and its definition is
$$
M(f)=\lim\limits_{T \rightarrow \infty} \frac{1}{T^N} \int_0^T\cdots\int_0^T f(x)dx.
$$
\end{Pro}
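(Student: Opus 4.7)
The plan is to exploit the density of trigonometric polynomials in the space of uniformly almost-periodic functions, so that it suffices to establish the identity for pure exponentials and then propagate the estimate by uniform approximation. Concretely, I would proceed in two steps: first verify the claim for $e_\lambda(x):=e^{i\lambda\cdot x}$, then extend by linearity to trigonometric polynomials and by uniform approximation to the general case.

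\textbf{Step 1: Pure exponentials.} For $\lambda=0$, $e_\lambda\equiv 1$, $M(e_\lambda)=1$, and both sides equal $\int_{\R^N}g(x)\,dx$. For $\lambda\neq 0$, a direct computation of the mean over the cube $[0,T]^N$ gives $M(e_\lambda)=0$, while
\[
\int_{\R^N}e^{iT\lambda\cdot x}g(x)\,dx=(2\pi)^{N/2}\hat g(-T\lambda).
\]
Since $g\in L^1(\R^N)$ and $|T\lambda|\to\infty$, the Riemann-Lebesgue lemma yields $\hat g(-T\lambda)\to 0$. By linearity, the identity holds for every trigonometric polynomial $P(x)=\sum_{j=1}^{m}c_je^{i\lambda_j\cdot x}$.

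\textbf{Step 2: Uniform approximation.} Invoking the Bohr-Bochner approximation theorem, for any $\e>0$ pick a trigonometric polynomial $P$ with $\|f-P\|_\infty<\e$. The mean value operator is sup-norm continuous (evident from its definition as a Cesaro-type limit), so $|M(f)-M(P)|\leq\e$. Write
\[
\Bigl|\int_{\R^N}f(Tx)g(x)\,dx-M(f)\int_{\R^N}g\,dx\Bigr|\leq \e\|g\|_1+\Bigl|\int_{\R^N}P(Tx)g(x)\,dx-M(P)\int_{\R^N}g\,dx\Bigr|+\e\,\Bigl|\int_{\R^N}g\,dx\Bigr|.
\]
The middle term vanishes as $T\to\infty$ by Step 1, so $\limsup_{T\to\infty}$ of the left-hand side is bounded by $\e(\|g\|_1+|\int_{\R^N}g\,dx|)$. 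Letting $\e\to 0$ finishes the proof.

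\textbf{Main obstacle.} The analytic core (Riemann-Lebesgue plus triangle inequality) is routine; the only conceptual ingredient is the approximation theorem guaranteeing that uniformly almost-periodic functions on $\R^N$ are uniform limits of trigonometric polynomials, together with the sup-norm continuity of the mean-value functional. The precise formulation depends on which notion of almost-periodicity is in force (Bohr vs.\ Besicovitch), and reconciling the definition used in \cite{Besicovitch-1955} with the approximation theorem is the one step I would check carefully before assembling the argument above.
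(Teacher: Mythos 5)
Your argument is correct, and it is essentially the standard proof of this fact: the paper itself gives no proof but simply cites \cite[Proposition 3.1]{Heinz-Kupper-Stuart1992JDE}, where the result is established by exactly this route (reduction to trigonometric polynomials via the Bohr--Weierstrass approximation theorem, the Riemann--Lebesgue lemma for pure exponentials, and sup-norm continuity of the mean). The point you flag is harmless: ``uniformly almost-periodic in the sense of Besicovitch'' refers to the Bohr class as treated in \cite{Besicovitch-1955}, for which the uniform approximation theorem holds, so your Step 2 goes through as written.
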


Next, we will give some estimates of $\{z_k\}$, which allows us to construct the bounded sequence $\{\nu_n\} \subset W$, see Lemma \eqref{Lem3.4}.

\begin{Lem}\label{Lem3.2}
Let $\{z_k\}$ be defined by \eqref{eq3.15}, then there exist constant $C_0>0$ and $k_0 \in \mathbb{N}$ such that
$$
\Psi(z_k)\geq C_0 k^{-\frac{N(\alpha-2)}{2}-\tau}, \  \forall k>k_0 .
$$
\end{Lem}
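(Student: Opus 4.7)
The plan is to restrict the defining integral of $\Psi(z_k)$ to a subset of $S$ on which $(H_4)$ can be applied pointwise, and then to use the almost-periodicity of $\psi$ via Proposition~\ref{Proposition 3.1} to extract the right order in $k$.

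First, I would note that on $B_{R_0 k}$ the cutoff $\eta(x/k)$ equals $1$, so $|z_k(x)|=\sqrt{2}\,k^{-N/2}|\psi(x)|$. Since $\psi$ is uniformly almost-periodic it is bounded, hence $\|z_k\|_\infty\leq\sqrt{2}\,k^{-N/2}\|\psi\|_\infty\to 0$, so for $k$ sufficiently large the pointwise size condition $|z_k|\leq t_0$ required by $(H_4)$ is satisfied everywhere. I would then fix $R_0>|x_0|+d$ (compatible with the stated requirement $|x_0|<R_0$) and consider the subset $A_k:=\{kx:x\in B_d(x_0)\}=B_{kd}(kx_0)$. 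Choosing $t=k\geq 1$ in the definition of $S$ shows $A_k\subset S$, while the bound $|kx|\leq k(|x_0|+d)\leq R_0 k$ shows $A_k\subset B_{R_0 k}$. On $A_k$ one has $k(|x_0|-d)\leq|x|\leq k(|x_0|+d)$, giving
$$|x|^{-\tau}\geq (|x_0|+d)^{-\tau}\,k^{-\tau}.$$

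Applying $(H_4)$ on $A_k$ and using $|z_k|^\alpha=2^{\alpha/2}\,k^{-N\alpha/2}|\psi|^\alpha$ yields
$$\Psi(z_k)\geq\int_{A_k}G(x,|z_k(x)|)\,dx\geq L\,2^{\alpha/2}(|x_0|+d)^{-\tau}\,k^{-\tau-N\alpha/2}\int_{A_k}|\psi(x)|^\alpha\,dx.$$
A change of variable $x=ky$ converts the remaining integral to
$$\int_{A_k}|\psi(x)|^\alpha\,dx=k^N\int_{B_d(x_0)}|\psi(ky)|^\alpha\,dy,$$
so collecting all the powers of $k$ produces exactly $N-\tau-N\alpha/2=-\tfrac{N(\alpha-2)}{2}-\tau$, the exponent announced in the lemma.

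It remains to bound $\int_{B_d(x_0)}|\psi(ky)|^\alpha\,dy$ from below by a positive constant uniformly in $k$. For this I would invoke Proposition~\ref{Proposition 3.1} with $f(y):=|\psi(y)|^\alpha$, which is uniformly almost-periodic as the composition of the continuous map $t\mapsto|t|^\alpha$ with the uniformly almost-periodic function $\psi$, and $g:=\chi_{B_d(x_0)}\in L^1(\R^N)$, obtaining
$$\lim_{k\to\infty}\int_{B_d(x_0)}|\psi(ky)|^\alpha\,dy=M(|\psi|^\alpha)\,|B_d(x_0)|.$$
The genuinely delicate point, and the only step where the argument could a priori break down, is the \emph{strict} positivity of $M(|\psi|^\alpha)$; it follows from the standard fact that a nonnegative, nontrivial, uniformly almost-periodic function has strictly positive mean (continuity produces a ball on which $f$ is bounded below, and the relatively dense set of almost-periods spreads this positivity to a set of positive density). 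Consequently, for all $k\geq k_0$ the integral is at least $\tfrac12 M(|\psi|^\alpha)|B_d(x_0)|$, and the lemma follows with
$$C_0:=\tfrac{1}{2}\,L\,2^{\alpha/2}(|x_0|+d)^{-\tau}\,M(|\psi|^\alpha)\,|B_d(x_0)|>0.$$
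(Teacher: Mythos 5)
Your proof is correct and follows essentially the same route as the paper's: apply $(H_4)$ on a subset of $S$, rescale by $x=ky$ to extract the exponent $-\tfrac{N(\alpha-2)}{2}-\tau$, and use Proposition~\ref{Proposition 3.1} to bound the remaining integral below via the mean value of $|\psi|^\alpha$; the only cosmetic difference is that you restrict to the ball $B_{kd}(kx_0)$ where $\eta\equiv 1$ and bound $|x|^{-\tau}$ pointwise, whereas the paper keeps the weight $|y|^{-\tau}\eta^\alpha(y)\chi_S(y)$ inside the $L^1$ function fed to Proposition~\ref{Proposition 3.1}. You also explicitly verify two points the paper leaves implicit --- the smallness $|z_k|\le t_0$ needed to invoke $(H_4)$ and the strict positivity of $M(|\psi|^\alpha)$ --- which is a welcome addition rather than a deviation.
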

\begin{proof}
By the definition of $\{z_k\}$ and $(H_4)$, we know that
\begin{equation*}
\begin{split}
\Psi(z_k)&=\int_{\R^N}G(x,|z_k|)dx\geq L \int_{S}|x|^{-\tau}|z_k|^\alpha dx\\
&=2^{\frac{\alpha}{2}}Lk^{-\frac{N\alpha}{2}}\int_{S}|x|^{-\tau}\eta^\alpha\bigg(\frac{x}{k}\bigg)|\nu(x)|^\alpha dx\\
&=2^{\frac{\alpha}{2}}Lk^{-\frac{N(\alpha-2)}{2}-\tau}\int_{S_k}|y|^{-\tau}\eta^\alpha (y)|\nu(ky)|^\alpha dy,
\end{split}
\end{equation*}
where $S_k=\{y\in\R^N:ky\in S\}$. For $k\geq 1$, $S\subset S_k$ and define the characteristic function
$$\chi_S(y)= \begin{cases}1, & y\in S, \\ 0, & y\notin S,\end{cases}$$
we know that
\begin{equation*}
\Psi(z_k)\geq 2^{\frac{\alpha}{2}}Lk^{-\frac{N(\alpha-2)}{2}-\tau}\int_{\R^N}|y|^{-\tau}\eta^\alpha(y)\chi_s(y)|\nu(ky)|^\alpha dy.
\end{equation*}
Noting that $|\nu|^\alpha$ is a uniformaly almost periodic-function, it follows from Proposition \ref{Proposition 3.1} that there exist constant $C_0>0$ and $k_0\in \mathbb{N}$ such that
\begin{equation*}
\Psi(z_k)\geq 2^{\frac{\alpha}{2}}Lk^{-\frac{N(\alpha-2)}{2}-\tau}M(|\nu|^\alpha)\int_{S}|y|^{-\tau}\eta^\alpha (y) dy:=C_0k^{-\frac{N(\alpha-2)}{2}-\tau}, \ \forall k>k_0.
\end{equation*}
\hfill
\end{proof}
\begin{Lem}\label{Lem3.3}
Let $\{z_k\}$ defined in \eqref{eq3.15}, then 
$$\lim\limits_{k \rightarrow \infty} \frac{((T-m I)z_k, z_k)_{2}}{\Psi(z_k)}=\lim \limits_{k \rightarrow \infty} \frac{\|(T-m I) z_k\|_{2}}{\Psi(z_k)}=0$$
and $$C_1\leq\|z_k\|\leq C_2,$$ where $C_1,C_2$ are positive constants.  
\end{Lem}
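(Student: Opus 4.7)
The plan is to exploit the formal eigen-identity $T\nu = m\nu$ (valid because, under the ansatz $u = v$, the block system \eqref{EQ} collapses to the scalar equation \eqref{SEQ} at $\lambda = m$, whose almost-periodic solution is $\psi$) so that $(T - mI)z_k$ picks up only commutator-type terms in which derivatives fall on the cutoff $\eta(\cdot/k)$. Writing $\phi_k(x) := \eta(x/k)$ and applying the product rule to $z_k = k^{-N/2}\phi_k(\psi,\psi)$,
\[
(T - mI)z_k = -k^{-N/2}\bigl(2\nabla\phi_k \cdot \nabla\psi + \psi\,\Delta\phi_k\bigr)(1,1),
\]
with $|\nabla\phi_k| = k^{-1}|(\nabla\eta)(x/k)|$ and $|\Delta\phi_k| = k^{-2}|(\Delta\eta)(x/k)|$, both supported in the annulus $\{R_0 k \leq |x| \leq (R_0+1)k\}$. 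This identity drives both estimates.

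For the quadratic form, I would integrate by parts using $2\phi_k\nabla\phi_k = \nabla(\phi_k^2)$: the identity $\int \phi_k\nabla\phi_k\cdot\nabla(\psi^2)\,dx = -\int(|\nabla\phi_k|^2 + \phi_k\Delta\phi_k)\psi^2\,dx$ cancels the $\phi_k\Delta\phi_k$ contribution against the direct $\psi\Delta\phi_k$ term and leaves
\[
((T-mI)z_k, z_k)_2 = 2k^{-N}\!\int |\nabla\phi_k|^2 \psi^2\,dx = 2k^{-2}\!\int |\nabla\eta(y)|^2 \psi^2(ky)\,dy
\]
after the change of variables $y = x/k$. Proposition~\ref{Proposition 3.1} applied with $f = \psi^2$ (uniformly almost-periodic by the Eastham theory) and $g = |\nabla\eta|^2 \in L^1$ makes the integral converge to $M(\psi^2)\int |\nabla\eta|^2$, hence the whole quantity is $O(k^{-2})$. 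A parallel direct expansion together with Cauchy--Schwarz and a second application of Proposition~\ref{Proposition 3.1} to $|\nabla\psi|^2$ and $\psi^2$ gives $\|(T-mI)z_k\|_2 = O(k^{-1})$. Setting $\beta := N(\alpha-2)/2 + \tau$, the hypothesis on $\tau$ in $(H_4)$ translates exactly to $\beta < 2$, and Lemma~\ref{Lem3.2} then yields $((T-mI)z_k, z_k)_2/\Psi(z_k) = O(k^{\beta - 2}) \to 0$, with the analogous conclusion for the norm ratio.

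For the two-sided bound $C_1 \leq \|z_k\| \leq C_2$, I would invoke the identification $E = H^1(\R^N, \R^2)$ with equivalent norm from Section~2 and substitute $y = x/k$ to obtain
\[
\|z_k\|_{H^1}^2 = 2\!\int \eta^2(y)\bigl[\psi^2(ky) + |\nabla\psi(ky)|^2\bigr]\,dy + O(k^{-1}),
\]
where the $O(k^{-1})$ cross term comes from the single power of $\nabla\phi_k$ in $\nabla u_k$. Proposition~\ref{Proposition 3.1} identifies the leading limit as $2\int\eta^2\cdot[M(\psi^2) + M(|\nabla\psi|^2)]$, which is strictly positive because $\psi\not\equiv 0$ is uniformly almost-periodic so $M(\psi^2) > 0$. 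This yields both a uniform upper bound and a positive lower bound on $\|z_k\|$ for $k$ sufficiently large.

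The main subtlety is the exact cancellation of the $\Delta\phi_k$ contribution in the quadratic-form computation: a naive estimate leaves a surviving $O(k^{-2})$ piece that could defeat the matching with $\Psi(z_k)$, and only the integration-by-parts identity $\nabla\cdot(\phi_k\nabla\phi_k) = |\nabla\phi_k|^2 + \phi_k\Delta\phi_k$ produces the clean $|\nabla\phi_k|^2\psi^2$ residue. The secondary technical point is verifying Proposition~\ref{Proposition 3.1} in each application: the polynomial factors $\psi^2$ and $|\nabla\psi|^2$ must themselves be genuinely uniformly almost-periodic in the Besicovitch sense, which follows from the $C^1$-regularity of $\psi$ recalled before \eqref{eq3.15} together with the fact that bounded, continuous, uniformly almost-periodic functions form an algebra stable under polynomial operations and under taking spatial derivatives of $C^1$ members.
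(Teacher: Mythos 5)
Your proposal follows essentially the same route as the paper: exploit $T\nu=m\nu$ so that $(T-mI)z_k$ consists only of terms where derivatives hit the cutoff, integrate by parts to reduce the quadratic form to $2k^{-2}\int|\nabla\eta|^2\psi^2(k\cdot)\,dx$, invoke Proposition~\ref{Proposition 3.1} for the limits and Lemma~\ref{Lem3.2} together with $\tau<\frac{4-N(\alpha-2)}{2}$ for the ratios, and use the same change of variables plus Proposition~\ref{Proposition 3.1} for the two-sided bound on $\|z_k\|$. The only wrinkle --- shared with the paper, whose displayed estimate in fact bounds $\|(T-mI)z_k\|_2^2/\Psi(z_k)$ rather than the unsquared ratio appearing in the statement --- is that your closing remark about the ``analogous conclusion for the norm ratio'' glosses over the fact that $O(k^{-1})/O(k^{-\beta})$ need not vanish when $\beta\in[1,2)$.
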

\begin{proof}
For $k>0$, it is easy to know that
$$\|z_k\|^2_{2}=2\int_{\mathbb{R}^N}|u_k|^2dx=2\int_{\mathbb{R}^N} \eta^2(x)|\psi(kx)|^2 dx$$
and
\begin{equation*}
\begin{split}
\|\nabla z_k\|_{2}^2&=2\int_{\mathbb{R}^N}|\nabla u_k|^2dx=k^{-N}\int_{\mathbb{R}^N}\bigg|k^{-1}\nabla \eta\bigg(\frac{x}{k}\bigg)\cdot \psi+\eta\bigg(\frac{x}{k}\bigg)\cdot\nabla \psi\bigg|^2dx\\
&\leq4k^{-2}\int_{\R^N}|\nabla \eta(x)|^2|\psi(kx)|^2dx+4\int_{\R^N}|\nabla \psi(kx)|^2\eta^2(x)dx.
\end{split}
\end{equation*}
Hence, we have
$$\lim\limits_{k\rightarrow\infty}\int_{\mathbb{R}^N}|z_k|^2dx=2 M(|\psi|^2)\int_{\mathbb{R}^N} |\eta(x)|^2dx,$$
$$\lim\limits_{k\rightarrow\infty}\int_{\R^N}|\nabla \eta(x)|^2|\psi(kx)|^2dx=M(|\psi|^2)\int_{\R^N}|\nabla \eta(x)|^2dx,$$
and
$$\lim\limits_{k\rightarrow\infty}\int_{\R^N}|\nabla \psi(kx)|^2|\eta(x)|^2dx=M(|\nabla \psi|^2)\int_{\R^N}|\eta(x)|^2dx.$$
By the above estimate, we know that there exist constants $C_1,C_2>0$ such that, for $k$ large enough
\begin{equation}\label{eq3.16}
0<C_1\leq\|z_k\|\leq C_2.
\end{equation}
Next, we will prove the first conclusion. Using the fact that $T\nu=m \nu$, we have
\begin{equation}\label{eq3.17}
\begin{split}
(T-mI)z_k&=\left(\begin{array}{cc}
0 & -\Delta+V \\
-\Delta+V & 0
\end{array}\right)\left(\begin{array}{cc}
k^{-\frac{N}{2}}\displaystyle\eta \bigg(\frac{x}{k}\bigg)\psi \\
k^{-\frac{N}{2}}\displaystyle\eta\bigg(\frac{x}{k}\bigg)\psi
\end{array}\right)-\left(\begin{array}{cc}
m k^{-\frac{N}{2}}\displaystyle\eta\bigg(\frac{x}{k}\bigg)\psi \\
m  k^{-\frac{N}{2}}\displaystyle\eta\bigg(\frac{x}{k}\bigg)\psi
\end{array}\right)\\
&=\left(\begin{array}{cc}
-k^{-\frac{N}{2}-2}\Delta\displaystyle\eta\bigg(\frac{x}{k}\bigg)\psi-2 k^{-\frac{N}{2}-1}\nabla \displaystyle\eta\bigg(\frac{x}{k}\bigg)\cdot\nabla \psi\\
-k^{-\frac{N}{2}-2}\Delta\displaystyle\eta\bigg(\frac{x}{k}\bigg)\psi-2 k^{-\frac{N}{2}-1}\nabla \displaystyle\eta\bigg(\frac{x}{k}\bigg)\cdot\nabla \psi
\end{array}\right).
\end{split}
\end{equation}
By integration by parts and \eqref{eq3.17} we know that
\begin{equation}\label{eq3.18}
\begin{split}
((T- m I)z_k,z_k)_{2}&=-2k^{-N-2}\int_{\R^N}\Delta\displaystyle\eta\bigg(\frac{x}{k}\bigg)\displaystyle\eta\bigg(\frac{x}{k}\bigg)\psi^2dx-2k^{-N-1}\int_{\R^N}\nabla\displaystyle\eta\bigg(\frac{x}{k}\bigg)\cdot\nabla(\psi^2)\displaystyle\eta\bigg(\frac{x}{k}\bigg)dx\\
&=2k^{-N-2}\int_{\R^N}\bigg|\nabla\displaystyle\eta\bigg(\frac{x}{k}\bigg)\bigg|^2\psi^2dx\\
&=2k^{-2}\int_{\R^N}|\nabla\displaystyle\eta(x)|^2\psi^2(kx)dx.
\end{split}
\end{equation}
By \eqref{eq3.18}, one has
\begin{equation}\label{eq3.19}
\begin{split}
k^2|((T-  m I)z_k,z_k)_{2}|&=2\int_{\R^N}|\nabla\displaystyle\eta(x)|^2\psi^2(kx)dx\\
&\rightarrow2M(|\psi|^2)\int_{\R^N}|\nabla\eta|^2dx\ \text{as}\  k\rightarrow\infty.
\end{split}
\end{equation}
Due to \eqref{eq3.17} and $\psi^2$, $\psi\partial_i\psi$, $\partial_i\psi\partial_j\psi$ are all uniformly almost periodic functions, we have
\begin{equation}\label{eq3.20}
\begin{split}
k^2\|(T- m I)z_k\|_{2}^2=&2k^2\int_{\R^N}\bigg|-k^{-\frac{N}{2}-2}\Delta\displaystyle\eta\bigg(\frac{x}{k}\bigg)\psi-2k^{-\frac{N}{2}-1}\nabla\displaystyle\eta\bigg(\frac{x}{k}\bigg)\cdot\nabla \psi\bigg|^2dx\\
=&2k^{-2}\int_{\R^N}(\Delta\eta(x))^2\psi^2(kx)dx+8\int_{\R^N}\sum_{j=1}^N\sum_{i=1}^N\partial_i\psi(kx)\partial_j\psi(kx)\partial_i\eta(x)\partial_j\eta(x)dx\\
&+8k^{-1}\int_{\R^N}\Delta\eta(x)\psi(kx)\sum_{i=1}^N\partial_i\eta(x)\partial_i\psi(kx)dx\\
\rightarrow&8\sum_{j=1}^N\sum_{i=1}^NM(\partial_i\psi\partial_j\psi)\int_{\R^N}\partial_i\eta(x)\partial_j\eta(x)dx \ \text{as}\ k\rightarrow\infty.\\
\end{split}
\end{equation}
Hence, by \eqref{eq3.19} and \eqref{eq3.20}, we get
\begin{equation}\label{eq3.22}
|((T- mI)z_k,z_k)_{2}|=O\bigg(\frac{1}{k^2}\bigg)
\end{equation}
and
\begin{equation}\label{eq3.23}
\|(T- mI)z_k\|^2_{2}=O\bigg(\frac{1}{k^2}\bigg).
\end{equation}
Hence, it follows from Lemma \ref{Lem3.3} that
\begin{equation*}
\frac{\|(T- mI)z_k\|_{2}^2}{\Psi(z_k)}\leq\frac{Ck^{-2}}{\Psi(z_k)}\leq Ck^{-2+\frac{N(\alpha-2)}{2}+\tau}\rightarrow0
\end{equation*}
and
\begin{equation*}
\frac{((T- mI)z_k,z_k)_2}{\Psi(z_k)}\leq\frac{Ck^{-2}}{\Psi(z_k)}\leq Ck^{-2+\frac{N(\alpha-2)}{2}+\tau}\rightarrow0
\end{equation*}
as $k\rightarrow\infty$ since $\tau\in(0,\frac{4-N(\alpha-2)}{2})$, and the proof is complete.
\hfill
\end{proof}

Set $M=\sup\{\Psi(z):z\in E \ \text{and}\ \|z\|=1 \}$. Obviously, it can be inferred from \eqref{eqg_4} that $M<\infty$. For any $z\in E$, we set $\rho(z)=\displaystyle\bigg(\frac{\Psi(z)}{2^q M}\bigg)^{\frac{1}{p}}$. We are going to establish an important inequality about nonlinear terms $\Psi(z)$.
\begin{Lem}\label{Lem3.1}
Let $z\in E$ with $\|z\|=1$, then $0\leq \rho(z)\leq \frac{1}{2}$ and $\Psi(w)\geq 2^{-q}\Psi(z)$ for any $w\in E$ with $\|z-w\|\leq\rho(z)$.
\end{Lem}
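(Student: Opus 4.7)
The plan is to split the claim into the elementary bound on $\rho(z)$ and the lower bound on $\Psi(w)$, the latter to be obtained by sandwiching $\Psi(z/2)$ between a scaling estimate from below and a convexity estimate from above.

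For $0 \le \rho(z) \le 1/2$, nonnegativity is immediate from $G \ge 0$. Since $\|z\|=1$, the definition of $M$ gives $\Psi(z) \le M$, hence $\rho(z) \le 2^{-q/p}$, and because $p \le q$ by $(H_2)$ we have $q/p \ge 1$, so $2^{-q/p} \le 1/2$.

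For the main inequality $\Psi(w) \ge 2^{-q}\Psi(z)$, I will assemble three ingredients. \emph{Scaling lower bound:} taking $s = 1/2$ in the second line of \eqref{eqg_3} yields $G(x, t/2) \ge 2^{-q}G(x,t)$ pointwise, which integrates to $\Psi(z/2) \ge 2^{-q}\Psi(z)$. \emph{Convexity of $\Psi$:} from \eqref{eqg_1}, $\partial_{tt} G(x,t) = g'(x,t)t + g(x,t) \ge (p-1)g(x,t) \ge 0$ while $\partial_t G(x,t) = g(x,t)t \ge 0$, so $G(x,\cdot)$ is nondecreasing and convex on $[0,\infty)$; composing with the Euclidean norm on $\mathbb{R}^2$ makes $z \mapsto G(x,|z|)$ convex, hence $\Psi$ is convex on $E$. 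Writing $z/2 = \tfrac{1}{2}w + \tfrac{1}{2}(z-w)$ and invoking this convexity gives $\Psi(z/2) \le \tfrac{1}{2}\Psi(w) + \tfrac{1}{2}\Psi(z-w)$. \emph{Error control:} assuming $w \ne z$ (the case $w=z$ being trivial) and setting $h = (z-w)/\|z-w\|$, one has $\|h\|=1$, so $\Psi(h) \le M$; since $\|z-w\|\le \rho(z)\le 1$, applying the second scaling inequality in \eqref{eqg_3} with $s = \|z-w\|$ yields $\Psi(z-w) \le \|z-w\|^p \Psi(h) \le \rho(z)^p M = 2^{-q}\Psi(z)$.

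Chaining these gives $2^{-q}\Psi(z) \le \Psi(z/2) \le \tfrac{1}{2}\Psi(w) + \tfrac{1}{2}\cdot 2^{-q}\Psi(z)$, which rearranges to $\Psi(w)\ge 2^{-q}\Psi(z)$. The step requiring most care is extracting the convexity of $\Psi$: the estimates \eqref{eqg_3} superficially resemble a convexity statement but really only encode homogeneity-type information, so convexity must instead be read off from the sign of $\partial_{tt}G$ via \eqref{eqg_1}. Once that is secured, the remainder is routine bookkeeping with the two scaling inequalities.
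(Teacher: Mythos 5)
Your proof is correct and takes essentially the same route as the paper's: both establish convexity of $\Psi$ from the sign of the second derivative via \eqref{eqg_1}, combine it with the scaling inequalities \eqref{eqg_3}, and control the error term by $\Psi(z-w)\leq \rho(z)^p M = 2^{-q}\Psi(z)$. Your midpoint chain $2^{-q}\Psi(z)\leq \Psi(z/2)\leq \tfrac{1}{2}\Psi(w)+\tfrac{1}{2}\Psi(z-w)$ is just a rearrangement of the paper's key inequality \eqref{eq3.13}, $\Psi(\mathbf{u})\geq 2^{1-q}\Psi(\mathbf{u}+\mathbf{v})-\Psi(\mathbf{v})$, applied with $\mathbf{u}=w$ and $\mathbf{v}=z-w$.
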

\begin{proof}
By $(H_1)$ and the definition of $M$, it is easy to know that
\begin{equation}\label{eq3.12}
0\leq \rho(z)\leq \frac{1}{2}.
\end{equation}
$(H_1)$ and \eqref{eqg_1} imply that, for any $z,\eta\in E$,
$$\Psi^{\prime\prime}(z)[\eta,\eta]=\int_{\R^N}\bigg(g(x,|z|)\eta^2+g^\prime (x,|z|)\frac{(z\cdot \eta)^2} {|z|} \bigg)dx\geq0,$$
which implies that $\Psi$ is convex. Hence, for all $\mathbf{u},\mathbf{v}\in E$, we get
$$0\leq\Psi(\mathbf{u}+\mathbf{v})\leq\frac{1}{2}(\Psi(2\mathbf{u})+\Psi(2\mathbf{v}))\leq2^{q-1}(\Psi(\mathbf{u})+\Psi(\mathbf{v})),$$
and so
\begin{equation}\label{eq3.13}
\Psi(\mathbf{u})\geq2^{1-q}\Psi(\mathbf{u}+\mathbf{v})-\Psi(\mathbf{v}).
\end{equation}
For $\mathbf{u},z\in E$ with $\|\mathbf{u}\|=1$ and if $0<\|z\|\leq\rho(\mathbf{u})\leq\frac{1}{2}$, from \eqref{eqg_3} we have
$$0\leq\Psi(z)=\int_{\R^N}G(x,|z|)dx\leq \|z\|^p\int_{\R^N}G\bigg(x,\frac{|z|}{\|z\|}\bigg)dx\leq\rho^p(\mathbf{u})M=2^{-q}\Psi(\mathbf{u}).$$
By \eqref{eq3.13}, for all $\mathbf{u},z\in E$ with $\|\mathbf{u}\|=1$ and $\|z\|\leq\rho(\mathbf{u})$, we know that
$$\Psi(\mathbf{u}-z)\geq2^{1-q}\Psi(\mathbf{u})-\Psi(z)\geq2^{-q}\Psi(\mathbf{u}).$$
Thus, the proof is completed.
\hfill
\end{proof}
\begin{Lem}\label{Lem3.4} There exists a
bounded sequence $\{\nu_n\}\subset W$ such that
    \begin{equation*}
        \lim_{n\to \infty}\frac{((T-m I)\nu_n,\nu_n)_{2}}{\Psi(\nu_n)}=0
    \end{equation*}
    and
    \begin{equation*}
        \|\nu_n\|_{2}=1,~~\Psi(\nu_n)>0,~n\in \mathbb{N}.
    \end{equation*}
\end{Lem}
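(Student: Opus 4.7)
The plan is to build $\nu_n$ by taking the $E^+$-projection of the test function $z_{k_n}$ from \eqref{eq3.15} and renormalizing in $L^2$: set $\nu_n = z_{k_n}^+/\|z_{k_n}^+\|_2$ along a suitable subsequence $k_n \to \infty$, so that $\nu_n \in E^+$ and $\|\nu_n\|_2 = 1$ by construction. It then suffices to verify $\nu_n \in W$, $\Psi(\nu_n) > 0$, and the vanishing of $((T-mI)\nu_n, \nu_n)_2/\Psi(\nu_n)$, all of which rest on a careful control of the $E^-$-component $z_{k_n}^-$ (written $z_n^-$ below).

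The first key estimate is $\|z_n^-\|^2 = o(\Psi(z_n))$. Since $E^{\pm}$ are $T$-invariant and $L^2$-orthogonal and $-(T-mI) \ge 2mI$ on $E^-$, one has
$$
\|z_n^-\|^2 + m\|z_n^-\|_2^2 = -((T-mI)z_n^-, z_n^-)_2 \le \|(T-mI)z_n^-\|_2\,\|z_n^-\|_2 \le \tfrac{1}{2m}\|(T-mI)z_n\|_2^2,
$$
which together with Lemma \ref{Lem3.3} gives the claim; since moreover $\|z_n\|_p, \|z_n\|_q \to 0$ (because $p,q > 2$ and of the prefactor $k_n^{-N/2}$), also $\Psi(z_n) \to 0$. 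Next, using the identity $((T-mI)z_n^+, z_n^+)_2 = ((T-mI)z_n, z_n)_2 + \|z_n^-\|^2 + m\|z_n^-\|_2^2$ (from the orthogonal splitting and $|T|=T$ on $E^+$), one deduces $((T-mI)z_n^+, z_n^+)_2 = o(\Psi(z_n))$. Combined with $\|z_n^+\|_2^2 \to 2M(|\psi|^2)\|\eta\|_2^2 > 0$, this yields $\|\nu_n\|^2 = m + ((T-mI)z_n^+, z_n^+)_2/\|z_n^+\|_2^2 \to m$, so $\nu_n \in W$ and $\{\nu_n\}$ is bounded in $E$; it also gives $((T-mI)\nu_n, \nu_n)_2 = o(\Psi(z_n))$.

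The main obstacle is the lower bound $\Psi(\nu_n) \ge c\Psi(z_n)$, which I expect to be the delicate step. Here I would apply Lemma \ref{Lem3.1} with $z = \hat{z}_n := z_n/\|z_n\|$ and $w = \hat{z}_n^+$. Since $\|z_n\| \in [C_1, C_2]$ by Lemma \ref{Lem3.3}, the scaling \eqref{eqg_3} yields $\Psi(\hat{z}_n) \ge c_1\Psi(z_n)$ and hence $\rho(\hat{z}_n) \ge c_2\Psi(z_n)^{1/p}$; on the other hand $\|\hat{z}_n - \hat{z}_n^+\| = \|z_n^-\|/\|z_n\| = o(\Psi(z_n)^{1/2})$, and since $p > 2$ and $\Psi(z_n) \to 0$ one has $\Psi(z_n)^{1/2} = o(\Psi(z_n)^{1/p})$. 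Thus $\|\hat{z}_n - \hat{z}_n^+\| \le \rho(\hat{z}_n)$ for large $n$, and Lemma \ref{Lem3.1} gives $\Psi(\hat{z}_n^+) \ge 2^{-q}\Psi(\hat{z}_n) \ge c\Psi(z_n)$. A final rescaling via \eqref{eqg_3} (using that $\|z_n\|/\|z_n^+\|_2$ stays in a compact subinterval of $(0,\infty)$) promotes this to $\Psi(\nu_n) \ge c'\Psi(z_n) > 0$; combined with $((T-mI)\nu_n, \nu_n)_2 = o(\Psi(z_n))$ this delivers $((T-mI)\nu_n, \nu_n)_2/\Psi(\nu_n) \to 0$ and completes the argument.
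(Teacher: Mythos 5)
Your proposal is correct and follows essentially the same route as the paper: take $\nu_n=z_{k_n}^+/\|z_{k_n}^+\|_2$, control $z_{k_n}^-$ through the spectral gap and the decay of $\|(T-mI)z_k\|_2$ from Lemma \ref{Lem3.3}, and invoke Lemma \ref{Lem3.1} (after normalizing by $\|z_n\|$) to get $\Psi(z_n^+)\geq c\,\Psi(z_n)$. The only difference is cosmetic: you derive $((T-mI)z_n^+,z_n^+)_2=o(\Psi(z_n))$ from the exact orthogonal-splitting identity rather than from the paper's equation \eqref{eq3.15+}, which is if anything a cleaner justification of the same estimate.
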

\begin{proof}
Let $\{z_k\}$ defined in \eqref{eq3.15}, then
\begin{equation}\label{eq3.24}
\|z_k^-\|^2+m\|z_k^-\|_{2}^2=|((T-m I)z_k,z_k^-)_{2}|\leq \|(T-m I)z_k\|_{2}\|z_k^-\|_{2}.
\end{equation}
By \eqref{eq3.24}, we get
\begin{equation}\label{eq3.25}
\|z_k^-\|\rightarrow0,\ \|z_k^-\|_{2}\rightarrow0\ \text{as}\ k\rightarrow\infty.
\end{equation}
Consequently, \eqref{eq3.25} and \eqref{eq3.16} implie that there exist constant $C_3,C_4>0$ such that
\begin{equation}\label{eq3.26}
C_3\leq\|z_k^+\|_{2}\leq C_4.
\end{equation}
Let $\nu_k:=\frac{z_k^+}{\|z_k^+\|_{2}}$, then $\|\nu_k\|_{2}=1$. In addition, we verify that $\{\nu_k\}\subset W$. In fact, it follows from \eqref{eq3.18} that, for $k$ large enough,\begin{equation}\label{eq3.15+}
((T-m I)z_k,z_k)_{2}=0,
\end{equation}
from which and \eqref{eq3.24} it follows that 
\begin{equation}\label{eq+}
((T-m I)z_k^+,z_k^+)_{2}=|((T-m I)z_k,z_k^-)_{2}|\leq\|(T-m I)z_k\|_{2}\|z_k^-\|_{2}\leq \frac{1}{m}\|(T-m I)z_k\|^2_{2}.
\end{equation}
Consequently, due to \eqref{eq3.26} and \eqref{eq+}, we infer that
$$\|\nu_k\|\rightarrow \sqrt{m}\ \text{as}\ k\rightarrow\infty,$$
and for $k$ large enough, there holds $\{\nu_k\}\subset W$.
Moreover, by \eqref{eq3.24} and Lemma \ref{Lem3.3}, for $k$ large enough,
$$\bigg\|\frac{z_k^-}{\|z_k\|}\bigg\|\leq\frac{\|z_k^-\|}{C_1}\leq\frac{\|(T-mI)z_k\|_{2}}{C_1}\leq\rho(z_k).$$
Then by Lemma \ref{Lem3.1}, for $k$ large enough, we know that
$$\Psi\bigg(\frac{z_k^+}{\|z_k\|}\bigg)\geq2^{-q}\Psi\bigg(\frac{z_k}{\|z_k\|}\bigg),$$
which, jointly with \eqref{eqg_3}, we have
\begin{equation}\label{eq3.27}
\Psi(z_k^+)\geq C\Psi(z_k).
\end{equation}
Note that,
\begin{equation}\label{eq3.28}
\begin{split}
((T-mI)\nu_k,\nu_k)_{2}&=\frac{((T-mI)z_k^+,z_k^+)_{2}}{\|z_k^+\|^2_{2}}=\frac{((T-mI)z_k,z_k^+)_{2}}{\|z_k^+\|^2_{2}}\\
&\leq \frac{\|(T-mI)z_k\|_2}{\|z_k^+\|_{2}}\leq C_3\|(T-mI)z_k\|_2.
\end{split}
\end{equation}
Hence, it follow from Lemma \ref{Lem3.4}, \eqref{eq3.27}, \eqref{eq3.28} and $\|z_k^-\|_{2}\rightarrow0(k\rightarrow\infty)$ that
$$\frac{((T-m I)\nu_k,\nu_k)_{2}}{\Psi(\nu_k)}\leq C\frac{\|(T-mI)z_k\|_2}{\Psi(z_k)}\rightarrow0 \ \text{as}\ k\rightarrow\infty. $$
This completes the proof.
\end{proof}

\subsection{The perturbed and reduced functional}
Given $a>0$, consider the following problem:
$$c_a=\inf_{w\in W\cap S_a}\sup_{z\in M(w)}J_0(z),$$
where $w\in E^+$, $S_a=\{z\in E\mid\|z\|_2=a\}$. One of the main aim of this section is to perform the reduction of $J_0$ on the set $W$. To this end, we prove that for any $w\in W$ with $\|w\|_2$ small enough the functional $J_0$ attains its maximum at some uniquely function in $M(w)$ that we will call $\Phi(w)$ in the following. 

\begin{Lem}\label{Lem3.5}
For any $w\in W$ with $\|w\|_{2}$ small enough, there exists
a unique $\Phi(w)\in M(w)$ such that
\begin{equation*}
J_0(\Phi(w))=\max_{z\in M(w)}J_0(z).
\end{equation*}
Moreover, $\Phi$ is a continuously differentiable function.
\end{Lem}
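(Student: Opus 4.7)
The plan is to parametrize $M(w)$ and reduce the maximization to one over a weakly compact convex subset of $E^-$. Every $z\in M(w)$ can be written uniquely as
$$z = s(y)w + y, \quad y \in E^-, \quad s(y) := \sqrt{1 - \|y\|_2^2/\|w\|_2^2},$$
with $y$ ranging over $B_w := \{y \in E^- : \|y\| \leq \sqrt{m}\|w\|_2/2\}$; the bound \eqref{eq2.5} then gives $\|y\|_2 \leq \|w\|_2/2$, so $s(y)\geq \sqrt{3}/2$ and the parametrization is smooth on all of $B_w$. Setting $\varphi_w(y) := J_0(s(y)w + y)$, the problem reduces to showing that $\varphi_w$ admits a unique interior maximum $y^*(w)$ on $B_w$ depending $C^1$ on $w$; then $\Phi(w) := s(y^*(w))w + y^*(w)$.

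For existence, $B_w$ is bounded, convex and norm-closed, hence weakly compact in $E^-$. Since $\Psi$ is convex by $(H_1)$ and continuous, it is weakly l.s.c., and the Hilbert norms are weakly l.s.c., so $\varphi_w$ is weakly u.s.c. and attains its maximum on $B_w$. For uniqueness, I will establish strict concavity of $\varphi_w$ when $\|w\|_2$ is small. The purely quadratic part of $\varphi_w(y)$ equals $\tfrac{1}{2}\|w\|^2 - \tfrac{\|w\|^2}{2\|w\|_2^2}\|y\|_2^2 - \tfrac{1}{2}\|y\|^2$, whose Hessian in $y$ is bounded above by $-\|\phi\|^2$. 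The Hessian of $-\Psi(s(y)w + y)$ splits into the term with $\Psi''(z)$ contracted against $s'(y)[\phi]w + \phi$, which is nonpositive by convexity of $\Psi$, plus the cross term $-\Psi'(z)\cdot s''(y)[\phi,\phi]\,w$, whose modulus is bounded by $C(\|w\|_2^{p-2}+\|w\|_2^{q-2})\|\phi\|^2$ via \eqref{eqg_1}, \eqref{eqg_4}, the Sobolev embedding $E\hookrightarrow L^p\cap L^q$, the bound $\|z\| = O(\|w\|_2)$ on $M(w)$, and $\|w\|\leq \sqrt{m+1}\|w\|_2$ from $w\in W$. Since $p>2$, for $\|w\|_2$ small this perturbation is dominated by the quadratic piece, yielding strict concavity and hence uniqueness.

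To guarantee that $y^*(w)$ is genuinely a critical point of $\varphi_w$ I will show it lies in the open interior of $B_w$ by a direct endpoint comparison: $\varphi_w(0) = \tfrac{1}{2}\|w\|^2 - \Psi(w)$ with $\Psi(w) = O(\|w\|_2^p)$ from \eqref{eqg_4} and $w \in W$, while at any boundary point $\|y\| = \sqrt{m}\|w\|_2/2$ one has $\varphi_w(y) \leq \tfrac{1}{2}\|w\|^2 - \tfrac{m\|w\|_2^2}{8}$ since $\Psi \geq 0$. For $\|w\|_2$ small one gets $\varphi_w(0) > \sup_{\partial B_w}\varphi_w$, forcing $\|y^*(w)\| < \sqrt{m}\|w\|_2/2$ and the Euler equation $D_y\varphi_w(y^*) = 0$. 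The $C^1$-smoothness of $w \mapsto y^*(w)$ (and thus of $\Phi$) then follows from the implicit function theorem applied to $F(w,y) := D_y\varphi_w(y)$: $F$ is $C^1$ in $(w,y)$ since $J_0 \in C^2(E,\R)$ and $s(y)$ is smooth, while $D_y F(w, y^*) = D_y^2\varphi_w(y^*)$ is uniformly negative definite by strict concavity and hence invertible.

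The main technical obstacle is the strict-concavity estimate: the composition $y \mapsto \Psi(s(y)w+y)$ is not convex in $y$ because $s(y)$ is a nonlinear concave function, so one must track both the $\Psi''$ and the $\Psi'\cdot s''$ contributions to the Hessian and exploit the growth hypotheses \eqref{eqg_1}, \eqref{eqg_3}, \eqref{eqg_4} together with the definition of $W$ to force these contributions to vanish compared to $-\|\phi\|^2$ as $\|w\|_2 \to 0$.
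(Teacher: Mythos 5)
Your proposal is correct and follows essentially the same route as the paper: your parametrization $y\mapsto s(y)w+y$ is exactly the paper's map $h(w,\phi)=\sqrt{\|w\|_{2}^2-\|\phi\|_{2}^2}\,\frac{w}{\|w\|_{2}}+\phi$, and the paper likewise proves strict concavity of $J_0\circ h$ in the $E^-$ variable for $\|w\|_{2}$ small, excludes boundary maximizers by comparison with $\phi=0$, and gets $C^1$ regularity of $\Phi$ from the implicit function theorem. The only cosmetic differences are that you argue existence via weak compactness (where weak upper semicontinuity of $\varphi_w$ should be deduced from its concavity plus strong continuity, since $y\mapsto s(y)$ is not weakly continuous) and use convexity of $\Psi$ to discard the $\Psi''$ contribution to the Hessian, whereas the paper bounds all four nonlinear Hessian terms $I_1,\dots,I_4$ individually by $\frac{1}{6}\|\eta\|^2$.
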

\begin{proof}
Given $w\in W$, we denote
\begin{equation*}
  \Xi(w)=\left\{\phi\in E^-\mid\|\phi\|\leq \frac{\sqrt{m}\|w\|_{2}}{2}\right\}.
\end{equation*}
Our first goal is to construct a surjection $h(w,\cdot): \Xi(w)\rightarrow M(w)$, which show that
\begin{equation}\label{eq3.3}
\sup_{z\in M(w)} J_0 (z)=\sup_{\phi\in \Xi(w)} J_0 (h(w,\phi)).
\end{equation}
In fact, define the map $h:\mathscr{D}(h)\to E$ by
\begin{equation}\label{eq3.2}
h(w,\phi)=\sqrt{\|w\|_{2}^2-\|\phi\|_{2}^2}\frac{w}{\|w\|_{2}}+\phi,
\end{equation}
where  \begin{equation*}
        \mathscr{D}(h)=\left\{(w,\phi)\in W\times E^-\mid\|\phi\|\leq \frac{\sqrt{m}\|w\|_{2}}{2}\right\}.
    \end{equation*}
Based on the fact that
$$\displaystyle\|\phi\|_{2}\leq
\frac{1}{\sqrt{m}}\|\phi\|\leq  \frac{\|w\|_{2}}{2} \ \ \mbox{if}\  (w,\phi)\in \mathscr{D}(h),$$
we know that $h(w,\phi)$ is well-define.

For any $\eta\in E^-$,
we have
\begin{equation}\label{eq3.4}
\frac{\partial h}{\partial \phi}(w,\phi)\eta
=-\frac{(\phi,\eta)_{2}}{\|w\|_{2}\sqrt{\|w\|_{2}^2-\|\phi\|_{2}^2}}w+\eta.
\end{equation}
Therefore, a direct computation jointly with \eqref{eq3.4}, shows that
\begin{equation*}
    \begin{split}
    \left\langle \frac{\partial (J_0\circ h)}{\partial \phi}(w,\phi),\eta \right\rangle
        &= \left\langle J_0^\prime(h(w,\phi)),\frac{\partial h}{\partial \phi}(w,\phi)\eta\right\rangle\\
        &
        =-\langle\phi,\eta \rangle-\frac{\|w\|^2}{\|w\|^2_{2}}(\phi,\eta)_{2}-\int_{\R^N}g(x,|h(w,\phi)|)h(w,\phi)\cdot \eta dx\\
        &\quad+\frac{(\phi,\eta)_{2}}{\|w\|^2\sqrt{\|w\|_{2}^2-\|\phi\|_{2}^2}}\int_{\R^N}g(x,|h(w,\phi)|)h(w,\phi)\cdot w dx,
    \end{split}
\end{equation*}
and
\begin{equation}\label{eq3.5}
\frac{\partial^2(J_0\circ
    h)}{\partial \phi^2}(w,\phi)[\eta,\eta]=-\|\eta\|^2-\frac{\|w\|^2\|\eta\|_{2}^2}{\|w\|_{2}^2}+\sum_{i=1}^4I_i,
\end{equation}
where
\begin{equation*}
    I_1=\frac{(\phi,\eta)_{2}}{\|w\|_2^2}\int_{\R^N}\frac{g^{\prime}(x,|h(w,\phi)|)}{|h(w,\phi)|}(h(w,\phi)\cdot\eta)(w\cdot w)dx\end{equation*}

\begin{equation*}
    I_2=\frac{\|\eta\|_2^2}{\|w\|_2^2}\int_{\R^N}g(x,|h(w,\phi)|)w^2dx,
\end{equation*}

\begin{equation*}
I_3=-\int_{\mathbb{R}^N}\frac{g^{\prime}(x,|h(w,\phi)|)}{|h(w,\phi)|}(h(w,\phi)\cdot\eta)(\phi\cdot\eta)dx
\end{equation*}
and
\begin{align*}
    I_4&=-\int_{\R^N}g(x,|h(w,\phi)|)\eta ^2dx.
\end{align*}
Using the definition of $h$ and the equivalent of norms on $W$, we can check that
\begin{equation}\label{eq3.6}
\|h(w,\phi)\|\leq C\|w\|_{2}.
\end{equation}
Consequently, from \eqref{eqg_1}, \eqref{eqg_4}, \eqref{eq3.6} and H\"older inequality, for $\|w\|_2$ small enough, we obtain
\begin{equation*}
\begin{split}
|I_1|&=\bigg|\frac{(\phi,\eta)_{2}}{\|w\|_2^2}\int_{\R^N}\frac{g^{\prime}(x,|h(w,\phi)|)}{|h(w,\phi)|}(h(w,\phi)\cdot\eta)(w\cdot w)dx\bigg|\\
&=\bigg|\frac{(\phi,\eta)_{2}} {\|w\|_2\sqrt{\|w\|_2^2-\|\phi\|_2^2}}\int_{\R^N}\frac{g^{\prime}(x,|h(w,\phi)|)}{|h(w,\phi)|}(h(w,\phi)\cdot\eta)(h(w,\phi)\cdot w)dx\bigg| \\
&\leq C\frac{\|\eta\|_2}{\|w\|_2}\bigg(\int_{\R^N}|h(w,\phi)|^{p-2}|w||\eta|dx+\int_{\R^N}|h(w,\phi)|^{q-2}|w||\eta|dx\bigg)\\
&\leq C\frac{\|\eta\|_2}{\|w\|_2}(\|h(w,\phi)\|^{p-2}_p\|w\|_p\|\eta\|_p+\|h(w,\phi)\|^{q-2}_q\|w\|_q\|\eta\|_q)\\
&\leq C(\|h(w,\phi)\|^{p-2}_p+\|h(w,\phi)\|^{q-2}_q)\|\eta\|^2\\
&\leq C(\|w\|_2^{p-2}+\|w\|_2^{q-2})\|\eta\|^2\\
&\leq\frac{1}{6}\|\eta\|^2,
\end{split}
\end{equation*}
A similar estimation yields
\begin{equation}\label{eq3.8}
        |I_i|\leq \frac{1}{6}\|\eta\|^2,~~i=2,3,4.
\end{equation}
Hence, due to \eqref{eq3.5} and \eqref{eq3.8},
we infer that
\begin{equation*}
\frac{\partial^2(J_0\circ
    h)}{\partial \phi^2}(w,\phi)[\eta,\eta]\leq -\frac{1}{6}\|\eta\|^2,
\end{equation*}
which means that the functional $ \phi  \mapsto(J_0\circ h)(w,\phi)$ is strictly concave on $\Xi(w)$ for $\|w\|_{2}$ small enough.

On the other hand, for any $w\in W$
with $\|w\|_{2}$ small enough, we assume that there exist the $\phi_1$ such that $\displaystyle\|\phi_1\|=\frac{\|w\|_{2}}{2}$ and $(J_0\circ h)(w,\phi_1)=\sup\limits_{\phi\in \Xi(w)} J_0 (h(w,\phi))$, then
\begin{equation*}
\begin{split}
&\quad (J_0\circ h)(w,0)-(J_0\circ h)(w,\phi_1)\\
&
=\frac{\|\phi_1\|_{2}^2\|w\|^2}{2\|w\|_{2}^2}+\frac{1}{2}\|\phi_1\|^2-\int_{\mathbb{R}^N}G(x,|w|)dx+\int_{\mathbb{R}^N}G(x,|h(w,\phi_1)|)dx\\
&\geq \frac{1}{8}\|w\|_{2}^2-C\|w\|_{p}^p-C\|w\|_{q}^q\\
&\geq \frac{1}{8}\|w\|_{2}^2-C\|w\|_{2}^p\\
&\geq \frac{1}{9}\|w\|_{2}^2,
\end{split}
\end{equation*}
since $p>2$, and therefore
$$(J_0\circ h)(w,\phi_1)< (J_0\circ h)(w,0)\leq\sup\limits_{\phi\in \Xi(w)} J_0 (h(w,\phi)),$$
which is a contradiction and $(J_0\circ h)(w,\cdot)$ cannot
achieve its maximum on the boundary of $\Xi(w)$. 
Also, from \eqref{eq3.3}, we know that there exists a unique
$\phi(w)\in \Xi(W)$ such that
\begin{equation*}
    J_0(h(w,\phi(w)))=\sup_{\tilde{\phi}\in \Xi(w)}J_0(h(w,\tilde{\phi})).
\end{equation*}
Setting 
\begin{equation}\label{eq3.9}
   \Phi(w):=h(w,\phi(w)),
\end{equation}
then $\Phi(w)\in M(w)$ satisfies
\begin{equation*}
    J_0(\Phi(w))=\sup_{z\in M(w)}J_0(z).
\end{equation*}
Moreover, we have $\displaystyle \frac{\partial (J_0\circ h)}{\partial \phi}(w,\phi(w))=0$. From the implicit function theorem, we know that $\phi(w)$ is continuously differentiable, and so $\Phi$ is. 
\end{proof}

As in \cite[Theorem 2.1]{Buffoni-Jeanjean1993}, we will add a perturbed part to functional $J_0$ to ensure the existence of parameter $\omega_a$ in
equation \eqref{eq1.1}. Specifically, for any $z\in E$ with $z^+\neq0$, the perturbed functional $\kappa:E\rightarrow\R$ given by
    \begin{equation*}
\kappa(z)=\frac{\|z^+\|^2-\displaystyle\int_{\mathbb{R}^N} g(x,|z|)z\cdot z^+dx}{\|z^+\|_{2}^2}
\end{equation*}
and consider the operator $H$ given by
    \begin{equation}\label{eq3.10}
Hz:=Tz-g(x,|z|)z-\kappa(z)z.
\end{equation}
By the definition of $\Phi$, for any $\eta\in E^-$, it is not difficult to see that
\begin{equation}\label{eq3.11}
\left(H\circ \Phi(w),\eta\right)_{2}=0.
\end{equation}
For $a>0$ small enough, we now consider the following reduced functional $I:W\to
\mathbb{R}$ defined by
\begin{equation*}
    I(w)=(J_0 \circ \Phi)(w).
\end{equation*}
Using Lemma \ref{Lem3.5}, we know that the functional $I$ is of class $C^1$ and
\begin{equation*}
    c_a=\inf_{w\in W\cap S_a}I(w).
\end{equation*}
Next, we will use the sequence $\nu_n$ obtained in Lemma \ref{Lem3.4} to prove the estimation of $c_a$.

\begin{Lem}\label{Lem3.6}
For the prescribed constant $a$ small enough, there holds $$c_a<\frac{ma^2}{2}.$$ 
\end{Lem}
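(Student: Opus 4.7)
\medskip

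The plan is to exhibit a concrete $w_n\in W\cap S_a$ on which $I$ can be estimated from above by less than $ma^2/2$. The natural choice is $w_n:=a\nu_n$, with $\{\nu_n\}\subset W$ the sequence from Lemma \ref{Lem3.4}: $\|\nu_n\|_2=1$, $\Psi(\nu_n)>0$, and $\epsilon_n:=((T-mI)\nu_n,\nu_n)_2/\Psi(\nu_n)\to 0$. Because $W$ is invariant under positive scaling, $w_n\in W\cap S_a$ whenever $\nu_n\in W$. The main step is to prove that the Lyapunov--Schmidt reduction degenerates at $w_n$, so that $\Phi(w_n)=w_n$ and $I(w_n)=J_0(w_n)$; the desired inequality will then follow from the homogeneity of $\Psi$.

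The reason the reduction is trivial at $w_n$ is a symmetry of $L^\pm$. From $T=\mathcal{J}\mathcal{S}$ and $\mathcal{S}=-\Delta+V\geq m>0$ one gets $T^2=\mathcal{S}^2$, hence $|T|=\mathcal{S}$ and $U=\mathcal{J}$, which identifies
\[
L^+=\{(f,f):f\in L^2(\R^N,\R)\},\qquad L^-=\{(f,-f):f\in L^2(\R^N,\R)\}.
\]
The construction $z_k=(u_k,u_k)$ forces $\nu_n$ to have the form $(\tilde f_n,\tilde f_n)$, and hence $g(x,|w_n|)w_n=g(x,\sqrt 2\,a|\tilde f_n|)(a\tilde f_n,a\tilde f_n)\in L^+$ is $L^2$-orthogonal to every $\eta\in E^-\subset L^-$. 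Evaluating at $\phi=0$ (where $\partial h/\partial\phi(w_n,0)$ is the identity on $E^-$) yields
\[
\partial_\phi(J_0\circ h)(w_n,0)\cdot\eta=-\int_{\R^N}g(x,|w_n|)w_n\cdot\eta\,dx=0\quad\text{for every }\eta\in E^-.
\]
By the strict concavity of $\phi\mapsto(J_0\circ h)(w_n,\phi)$ proved in Lemma \ref{Lem3.5} (which needs $a=\|w_n\|_2$ small), $\phi=0$ is the unique maximizer; therefore $\Phi(w_n)=w_n$ and $I(w_n)=J_0(w_n)$.

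The final estimate is now elementary. Writing $\|\nu_n\|^2=(T\nu_n,\nu_n)_2=m+\epsilon_n\Psi(\nu_n)$ and applying $\Psi(a\nu_n)\geq a^q\Psi(\nu_n)$ from \eqref{eqg_3} (valid for $0<a\leq 1$) gives
\[
I(w_n)=\tfrac12 a^2\|\nu_n\|^2-\Psi(a\nu_n)\leq\tfrac{ma^2}{2}+\Psi(\nu_n)\Bigl(\tfrac{a^2\epsilon_n}{2}-a^q\Bigr).
\]
Since $q>2$ and $\epsilon_n\to 0$, for any $a\in(0,1)$ sufficiently small one can choose $n$ with $\epsilon_n<2a^{q-2}$; the bracket is then negative, $\Psi(\nu_n)>0$, so $c_a\leq I(w_n)<\tfrac{ma^2}{2}$.

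The main obstacle is the symmetry identification $U=\mathcal{J}$, which forces the linear correction $\partial_\phi(J_0\circ h)(w_n,0)$ to vanish. Without this cancellation, a direct concavity-based estimate would only yield $I(w_n)\leq J_0(w_n)+O(a^{2p-2})$, which is insufficient against the optimal lower bound $\Psi(a\nu_n)\geq a^q\Psi(\nu_n)$ unless the unimposed condition $q<2p-2$ holds.
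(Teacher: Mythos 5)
Your proof is correct and reaches the conclusion by the same basic route as the paper --- evaluating $I$ at the test functions $a\nu_n$ from Lemma \ref{Lem3.4}, then combining $\Psi(a\nu_n)\geq a^{q}\Psi(\nu_n)$ with $\epsilon_n=((T-mI)\nu_n,\nu_n)_2/\Psi(\nu_n)\to 0$ --- but you dispose of the reduction term in a genuinely different way. The paper leaves $\phi(\hat\nu_n)$ unknown, uses convexity of $\Psi$ to get $\Psi(\Phi(\hat\nu_n))\geq 2^{1-2q}\Psi(\hat\nu_n)-\Psi(-\phi(\hat\nu_n))$, and then absorbs $-\tfrac12\|\phi(\hat\nu_n)\|^2+C\|\phi(\hat\nu_n)\|^{p}+C\|\phi(\hat\nu_n)\|^{q}\leq 0$ for $a$ small; this costs a factor $2^{1-2q}$ in front of $a^{q}\Psi(\nu_n)$ but uses no structural information about $E^{\pm}$ beyond $\|\phi(w)\|\leq\tfrac{\sqrt m}{2}\|w\|_2$. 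You instead observe that $T^2=\mathcal{S}^2$ with $\mathcal{S}>0$ forces $|T|=\mathrm{diag}(\mathcal{S},\mathcal{S})$ and $U=\mathcal{J}$, hence $L^{\pm}=\{(f,\pm f)\}$; since $z_k=(u_k,u_k)\in L^{+}$, the function $\nu_n$ is diagonal, $g(x,|a\nu_n|)a\nu_n\in L^{+}$ is $L^2$-orthogonal to $E^{-}$, so $\partial_\phi(J_0\circ h)(a\nu_n,0)=0$ and strict concavity (Lemma \ref{Lem3.5}) gives $\phi(a\nu_n)=0$, i.e.\ $\Phi(a\nu_n)=a\nu_n$ exactly. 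This is a clean and correct simplification: it yields the sharper bound $I(a\nu_n)\leq\tfrac{ma^2}{2}+\Psi(\nu_n)\bigl(\tfrac{a^2\epsilon_n}{2}-a^{q}\bigr)$ with constant $1$ in place of $2^{1-2q}$, and it incidentally shows that the estimates of $\|z_k^-\|$ in Lemmas \ref{Lem3.3}--\ref{Lem3.4} are vacuous here because $z_k^-=0$ identically. The trade-off is that your cancellation is tied to the exact diagonal/antidiagonal symmetry of this particular block operator, whereas the paper's estimate survives in the general strongly indefinite setting (e.g.\ the Dirac-type problems cited), where no such identification of $E^{\pm}$ is available.
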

\begin{proof}
    Let $\hat{\nu}_n=a\nu_n$, where $\nu_n$ is defined in Lemma \ref{Lem3.4}. By the definition of $\Xi(w)$, we know that 
\begin{equation*}
\frac{\sqrt{3}}{2}\|\hat{\nu}_n\|_{2}\leq\sqrt{\|\hat{\nu}_n\|_{2}^2-\|\phi(\hat{\nu}_n)\|_{2}^2}\leq \|\hat{\nu}_n\|_{2},
\end{equation*}
and therefore
\begin{equation}\label{eq5.21}
0<\displaystyle\frac{\sqrt{\|\hat{\nu}_n\|_{2}^2-\|\phi(\hat{\nu}_n)\|_{2}^2}}{2\|\hat{\nu}_n\|_{2}}\leq\frac{1}{2}.
\end{equation}
Using \eqref{eqg_3}, \eqref{eqg_4}, \eqref{eq3.9}, \eqref{eq5.21} and the convexity of $\Psi$, we may estimate
    \begin{equation*}
        \begin{split}
            I(\hat{\nu}_n)&=\frac{a^2-\|\phi(\hat{\nu}_n)\|_{2}^2}{2a^2}\|\hat{\nu}_n\|^2-\frac{1}{2}\|\phi(\hat{\nu}_n))\|^2-\Psi(\Phi(\hat{\nu}_n))\\
            &\leq \frac{1}{2}\|\hat{\nu}_n\|^2-\frac{1}{2}\|\phi(\hat{\nu}_n))\|^2-2\Psi\bigg(\frac{\sqrt{a^2-\|\phi(\hat{\nu}_n)\|_2^2}}{2a}\hat{\nu}_n\bigg)+\Psi(-\phi(\hat{\nu}_n))\\
            &\leq \frac{1}{2}\|\hat{\nu}_n\|^2-\frac{1}{2}\|\phi(\hat{\nu}_n))\|^2-2^{1-2q}\Psi(\hat{\nu}_n)+\Psi(-\phi(\hat{\nu}_n))\\
            &\leq \frac{1}{2}\|\hat{\nu}_n\|^2-\frac{1}{2}\|\phi(\hat{\nu}_n)\|^2- 2^{1-2q}\Psi(\hat{\nu}_n)+C\|\phi(\hat{\nu}_n)\|^p+C\|\phi(\hat{\nu}_n)\|^q\\
            &\leq \frac{a^2}{2}\|\nu_n\|^2-2^{1-2q}a^p\Psi(\nu_n)-\frac{1}{2}\|\phi(\hat{\nu}_n)\|^2+C\|\phi(\hat{\nu}_n)\|^p+C\|\phi(\hat{\nu}_n)\|^q.
        \end{split}
    \end{equation*}
    Consequently, the boundedness of $\{\nu_n\}$ and the fact that $\phi\in C^1$ give for $a$ small enough,
    \begin{equation*}
        -\frac{1}{2}\|\phi(\hat{\nu}_n)\|^2+C\|\phi(\hat{\nu}_n)\|^p+C\|\phi(\hat{\nu}_n)\|^q\leq 0.
\end{equation*}
Also, we recall the following estimate obtained in Lemma \ref{Lem3.4}:
\begin{equation*}
        \frac{\|\nu_n\|^2-m}{\Psi(\nu_n)}<2^{2-2q}a^{q-2}\  \text{as}\ n\rightarrow\infty.
    \end{equation*}
   Thus,
    \begin{equation*}
        I(\hat{\nu}_n)\leq  \frac{a^2}{2}\|\nu_n\|^2-2^{1-2q}a^q\Psi(\nu_n)
        <\frac{ma^2}{2},
    \end{equation*}
and this concludes the proof. \end{proof}

If the minimizing sequence $\{w_n\}$ of $I$ restricted to $W\cap S_a$, i.e.  $\{w_n\} \subset W\cap S_a$ satisfying $I(w_n)\rightarrow c_a$ as $n\rightarrow\infty$ for $a>0$ small enough, we
can assume the $\{w_n\}$ belongs to the following set (see \cite[Theorem 2.2]{Buffoni-Jeanjean1993}
):
\begin{equation*}
    X_a=\left\{w\in E^+\mid\|w\|_{2}=a~\text{and}~\|w\|^2\leq \left(m+a^{\frac{p-2}{2}}\right)a^2\right\}\subset W \cap S_a.
\end{equation*}
Note that, $I\in C^1(X_a,\mathbb{R})$ and for all $w\in X_a$,
\begin{equation*}
\left\langle I_{|X_a}^{\prime}\left(w\right),
\eta\right\rangle=\left\langle I^{\prime}\left(w\right),
\eta\right\rangle~~\text{for}~~\eta\in T_w,
\end{equation*}
where
\begin{equation*}
T_w=\left\{\eta\in E^+\mid (w,\eta)_{2}=0\right\}.
\end{equation*} 
\subsection{Proof of Theorem \ref{Thm1.1}}
%%%%%%%%%%%%%%%%%%%%%%%%%%%%%%%%%%%%%%%%%%%%%%%%%%%%%%%%%%%%%%%%%%%%%%%%%%%%%%%%%%%%%%%%%%%%%%%%%%%%%%%%
To begin with, we recall that $\left\{w_n\right\} \subset X_a$ is termed a Palais-Smale sequence at level $c$, or a $(PS)_c$-sequence, for $I$ if $I\left(w_n\right) \rightarrow
c$
 and $I_{\mid X_{a}}^{\prime}(w_n) \rightarrow 0$ as $n\rightarrow\infty$. 
We say that $I_{\mid X_a}$ satisfies the Palais-Smale condition is the $(PS)_c$-sequence $\left\{w_n\right\}\subset X_a $ contains a convergent subsequence. Our first goal will be to show that the functional $I$ satisfies the Palais-Smale condition restricted to $X_a$  at all levels strictly below $\displaystyle\frac{ma^2}{2}$.
 \begin{Lem}\label{Lem3.8}
Let $a$ small enough, then any sequence $\{w_n\}\subset X_a$ such that $I(w_n)\to c<\displaystyle\frac{ma^2}{2}$ and $I_{|X_a}^\prime (w_n)\to
0$ as $n\to \infty$ has a convergent subsequence.
 \end{Lem}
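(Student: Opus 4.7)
The plan is to establish the Palais-Smale condition by a concentration-compactness argument that exploits the $\Z^N$-periodicity of $V$, with the level restriction $c<\tfrac{ma^2}{2}$ serving to rule out non-compact mass splitting. First, since $\{w_n\}\subset X_a$ satisfies $\|w_n\|^2\leq (m+a^{(p-2)/2})a^2$, the sequence is bounded in $E^+$, and by the admissibility condition built into $\mathscr{D}(h)$ the sequence $\{\phi(w_n)\}$ is bounded in $E^-$ by $\tfrac{\sqrt{m}\,a}{2}$. Passing to a subsequence, $w_n\rightharpoonup w_*$ in $E^+$ and $\phi(w_n)\rightharpoonup \phi_*$ in $E^-$, with strong convergence in $L^q_{\loc}$ for $q\in[1,\tfrac{2N}{N-2})$ and a.e.\ pointwise convergence.

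Next, I would combine the $E^-$-variational identity $(H\circ\Phi(w_n),\eta)_{2}=0$ from \eqref{eq3.11} with the constrained Palais-Smale condition $I'_{|X_a}(w_n)\to 0$ to derive an approximate Euler-Lagrange equation for $z_n:=\Phi(w_n)$,
$$T z_n - g(x,|z_n|)z_n - \kappa(z_n) z_n = o(1) \quad \text{in the dual of } E,$$
with Lagrange multipliers $\kappa(z_n)$ bounded (and hence convergent along a subsequence) thanks to \eqref{eq2.5}, \eqref{eqg_4} and the smallness of $a$. The weak limit $z_*:=h(w_*,\phi_*)$ then satisfies a corresponding limit equation.

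The main obstacle is the loss of compactness from the periodicity of $V$. Suppose, for contradiction, that $w_n\not\to w_*$ strongly. Then $\|w_n-w_*\|_{2}\not\to 0$, and Lions' concentration-compactness lemma applied to $w_n-w_*$ supplies a sequence $y_n\in\Z^N$ with $|y_n|\to\infty$ such that $(w_n-w_*)(\cdot+y_n)\rightharpoonup \tilde{w}\neq 0$ in $E^+$. Since $(V_1)$ ensures that $T$ commutes with $\Z^N$-translations, this translated profile supplies an additional admissible nonzero component. A Brezis-Lieb-type splitting, combined with the continuous dependence of $\Phi$ from Lemma~\ref{Lem3.5}, then yields
$$\|w_n\|_{2}^2=\|w_*\|_{2}^2+\|\tilde{w}\|_{2}^2+o(1), \qquad I(w_n)=I(w_*)+I(\tilde{w}_n)+o(1),$$
where $\tilde{w}_n:=(w_n-w_*)(\cdot+y_n)$. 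Since each non-trivial $w\in W\cap S_b$ satisfies $I(w)\geq\tfrac{m}{2}b^2+O(b^p)$ by \eqref{eq2.5} and \eqref{eqg_4}, and the masses decompose as $\|w_*\|_{2}^2+\|\tilde{w}\|_{2}^2\leq a^2$, adding the two pieces forces $c\geq\tfrac{ma^2}{2}+O(a^p)\geq\tfrac{ma^2}{2}$ for $a$ small, contradicting the hypothesis $c<\tfrac{ma^2}{2}$. Hence $w_n\to w_*$ in $L^2$, which, together with the approximate equation, upgrades to strong convergence in $E^+$. The hardest step is justifying the energy splitting through the implicit nonlinear reduction $\Phi$: one must verify that $\phi(w_n)\in E^-$ decomposes consistently with the $E^+$-splitting of $w_n$ before the Brezis-Lieb argument can be applied cleanly, and this requires careful use of the uniqueness and strict concavity established in Lemma~\ref{Lem3.5}.
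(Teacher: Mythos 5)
Your first two steps (boundedness of $\{w_n\}$ and $\{\phi(w_n)\}$, extraction of weak limits, derivation of the approximate Euler--Lagrange equation $H\circ\Phi(w_n)\to 0$ with convergent multipliers $\kappa(\Phi(w_n))\to\omega_a$) agree with the paper. The gap is in your compactness mechanism. You invoke Lions' concentration-compactness and $\Z^N$-translation invariance, but the functional $I$ is \emph{not} translation invariant: by $(H_3)$ the coefficient $K(x)=g(x,1)$ vanishes as $|x|\to\infty$, so $\Psi$ does not commute with translations and a profile escaping to infinity does not carry a well-defined limiting energy of the same functional. Hypothesis $(H_3)$ is precisely the compactness device you never use: it makes the nonlinear term compact, i.e.\ $\int_{\R^N}\bigl(g(x,|\Phi(w_n)|)\Phi(w_n)-g(x,|z_a|)z_a\bigr)(\Phi(w_n)-z_a)^{\pm}\,dx\to 0$. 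The paper's proof is therefore direct, with no dichotomy: test \eqref{eq3.34} and \eqref{eq3.35} against $(\Phi(w_n)-z_a)^{+}$, kill the nonlinear term by $(H_3)$, and absorb the remaining term $\omega_a\|(\Phi(w_n)-z_a)^+\|_2^2$ into $\|(\Phi(w_n)-z_a)^+\|^2$ using \eqref{eq2.5} and the strict inequality $\omega_a\le 2c/a^2<m$. This also corrects your reading of the level restriction: $c<\tfrac{ma^2}{2}$ is not there to rule out mass splitting, but to force the Lagrange multiplier strictly below the spectral threshold $m$, which is what makes the quadratic form $\|\cdot\|^2-\omega_a\|\cdot\|_2^2$ coercive on $E^+$.

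Even taken on its own terms, your concluding contradiction does not close. From $I(w)\ge\tfrac{m}{2}\|w\|_2^2-C\|w\|^p$ the correction term is \emph{negative}, and the split masses only satisfy $\|w_*\|_2^2+\|\tilde w\|_2^2\le a^2$, so adding the two pieces yields at best $c\ge\tfrac{m}{2}a^2-Ca^p$, which is compatible with $c<\tfrac{ma^2}{2}$ (indeed Lemma \ref{Lem3.6} shows $c_a<\tfrac{ma^2}{2}$ is always attained in the infimum). So the inequality $c\ge\tfrac{ma^2}{2}+O(a^p)\ge\tfrac{ma^2}{2}$ you assert is unjustified on two counts: the sign of the $O(a^p)$ term and the direction of the mass inequality.
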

\begin{proof}
Consider a $(PS)_c$-sequence $\{w_n\}\subset X_a$ for $I$, i.e.
\begin{equation*}
I(w_n)\to c<\frac{ma^2}{2}~~\text{and}~~I_{|X_a}^\prime (w_n)\to
0~~\text{as}~n\to \infty.
\end{equation*}
Without loss of generality, we may assume that,
\begin{equation*}
    |\left\langle I ^{\prime}\left(w_n\right), \eta\right\rangle|\leq \frac{1}{n}\|\eta\|,~~\forall \eta \in T_{w_n}.
\end{equation*}
The first step is to pay attention to
$$
(\Phi(w_n),\Phi^\prime(w_n)\eta)_{2}=(w_n,\eta)_{2}=0\ \text{for any}\  \eta\in T_{w_n},
$$
thus
\begin{equation}\label{eq3.30}
    \begin{split}
    \langle I^\prime (w_n),\eta \rangle&=\langle J_0^\prime(\Phi(w_n)),\Phi^\prime (w_n)\eta \rangle-\kappa(\Phi(w_n))\left(\Phi(w_n)),\Phi^\prime (w_n)\eta\right)_{2}\\
    &=\left(H\circ \Phi(w_n),\Phi^\prime (w_n)\eta\right)_{2}.
    \end{split}
\end{equation}
Also, a direct computation yields
\begin{equation}\label{eq3.31}
  \Phi^\prime(w_n)\eta=\frac{\sqrt{a^2-\|\phi(w_n)\|_{2}^2}}{a}\eta-\frac{(\phi(w_n),\phi^\prime(w_n)\eta)_{2}}{a\sqrt{a^2-\|\phi(w_n)\|_{2}^2}}w_n+\phi^\prime(w_n)\eta
\end{equation}
and by the definition of $H$, we have
\begin{equation}\label{eq3.32}
 (H\circ \Phi(w_n),w_n)_{2}=0.
\end{equation}
Consequently, $\eqref{eq3.30}$-$ \eqref{eq3.32}$ and \eqref{eq3.11} together give that
\begin{equation}\label{eq3.29}
    \langle I^\prime (w_n),\eta \rangle=\frac{\sqrt{a^2-\|\phi(w_n)\|_{2}^2}}{a}
 \left(H\circ \Phi(w_n),\eta\right)_{2},~~\forall \eta\in T_{w_n}.
\end{equation}
Moreover, for any $\eta\in E^+$, it is easy to see that
\begin{equation*}
    \eta-\frac{(w_n,\eta)_{2}}{a^2}w_n\in T_{w_n}.
\end{equation*}
Since \eqref{eq3.29}, we may estimate
$$
\begin{aligned}
  \left(H\circ \Phi(w_n),\eta\right)_{2}& =   \left(H\circ \Phi(w_n),\eta-\frac{(w_n,\eta)_{2}}{a^2}w_n\right)_{2} \\
  &=\frac{a}{\sqrt{a^2-\|\phi(w_n)\|_{2}^2}}\bigg(\|\eta\|+\frac{(w_n,\eta)_{2}}{a^2}\|w_n\|\bigg)\\
    &=\frac{a}{\sqrt{a^2-\|\phi(w_n)\|_{2}^2}}\left\langle J^\prime (w_n),\eta-\frac{(w_n,\eta)_{2}}{a^2}w_n \right\rangle\\
    &\leq \frac{a}{n\sqrt{a^2-\|\phi(w_n)\|_{2}^2}}\bigg(\|\eta\|+\frac{(w_n,\eta)_{2}}{a^2}\|w_n\|\bigg) \\
    &\leq \frac{C}{n}\|\eta\|.
\end{aligned}
$$
Consequently, the above estimate and \eqref{eq3.11} together gives that
\begin{equation}\label{eq3.33}
    H\circ \Phi(w_n)\to 0~\text{in}~H^{-1}\left(\mathbb{R}^N,\R^2\right).
\end{equation}
Going to further subsequence, we may assume that
\begin{equation*}
    \Phi(w_n)\rightharpoonup z_a \ \text{in}\ E \ \text{and}\ \kappa(\Phi(w_n))\to \omega_a
    \  \text{as}\ n\rightarrow\infty.
\end{equation*}
In addition, $w_n$ and $z_a$ also satisfy the following equations, respectively
\begin{equation}\label{eq3.34}
    T\circ \Phi(w_n)=g(x,|\Phi(w_n)|)\Phi(w_n)+\kappa(\Phi(w_n))\Phi(w_n)+o_n(1),
\end{equation}
\begin{equation}\label{eq3.35}
    Tz_a=g(x,|z_a|)z_a+\omega_a z_a.
\end{equation}
The final step is to prove that $\Phi(w_n)\to z_a$ in $E$. In fact, in
view of \eqref{eqg_1} and \eqref{eq3.33}, we obtain
$$
\begin{aligned}
  \omega_a&=\lim_{n\to \infty}\kappa(\Phi(v_n))\\
    &=\frac{1}{a^2}\lim_{n\to \infty}\left[(T\circ \Phi(w_n),\Phi(w_n))_{2}-\int_{\mathbb{R}^N}g(x,|\Phi(w_n)|)|\Phi(w_n)|^2dx\right]\\
    & =\frac{1}{a^2}\lim_{n\to \infty}\bigg[2I(w_n)+2\int_{\R^N}G(x,|\Phi(w_n)|)dx-\int_{\mathbb{R}^N}g(x,|\Phi(w_n)|)|\Phi(w_n)|^2dx\bigg]\\
    & \leq \frac{2}{a^2}\lim_{n\to \infty}I(w_n)\\
    & =\frac{2c}{a^2}<m.
\end{aligned}
$$
The scalar product of \eqref{eq3.34} and \eqref{eq3.35} with
$(\Phi(w_{n})-z_{a})^{+}$, respectively, we get
\begin{equation}\label{eq3.36}
    \begin{split}
        \|(\Phi(w_n)-z_a)^+\|^2=&\int_{\mathbb{R}^N}(g(x,|\Phi(w_n)|)\Phi(w_n)-g(x,|z_a|)z_a) (\Phi(w_n)-z_a)^+dx\\
        &+\left(\kappa(\Phi(w_n))\Phi(w_n)-\omega_a z_a,(\Phi(w_n)-z_a)^+\right)_{2}+o_n(1).
    \end{split}
\end{equation}
Using the assumption $(H_3)$, we may estimate
\begin{equation}\label{eq3.37}
    \int_{\mathbb{R}^N}(g(x,|\Phi(w_n)|)\Phi(w_n)-g(x,|z_a|)z_a) (\Phi(w_n)-z_a)^+dx\rightarrow 0
\end{equation}
as $n\to \infty$. Moreover, we have
\begin{equation}\label{eq3.38}
    \begin{split}
        &\quad  |\left(\kappa(\Phi(w_n))\Phi(w_n)-\omega_a z_a,(\Phi(w_n)-z_a)^+\right)_{2}|\\
        &=\left|(\kappa(\Phi(w_n))-\omega_a)(\Phi(w_n),(\Phi(w_n)-z_a)^+)_{2}+ \omega_a\|(\Phi(w_n)-z_a)^+\|_{2}^2\right|\\
        &\leq o_n(1)\|(\Phi(w_n)-z_a)^+\|_{2}+\omega_a\|(\Phi(w_n)-z_a)^+\|_{2}^2.
    \end{split}
\end{equation}
Thus, combining \eqref{eq3.36}-\eqref{eq3.38}, we know that $\Phi^+(w_n)\to
z_a^+$ in $E$. Similarity, we can obtain $\Phi^-(w_n)\to z_a^-$ in $E$
and then $\Phi(w_n)\to z_a$ in $E$. By the definition of $\Phi$ and $w_n$, we have $w_n\rightarrow w_a$ in $E$, where $w_a$ belongs to $X_a$.
\end{proof}
\medskip

\noindent {\bf The proof of Theorem 1.1. (Existence)} 

As a first step, we assert the existence of $(PS)_{c_{a}}$-sequence of $I$ restricts to
$X_a$.
Let $\{w_n\}\subset W\cap S_a$ be a minimizing sequence
with respect to the $I$ such that
\begin{equation*}
    I(w_n)\to c_a.
\end{equation*}
For $a>0$ small enough, up to a subsequence, we can assume that
\begin{equation*}
    \|w_n\|^2\leq\left(m+\frac{1}{2} a^{\frac{p-2}{2}}\right) a^2,~\forall n \in \mathbb{N}.
\end{equation*}
Using the Ekeland variational principle\cite{Ekeland1974JMAA} on the complete metric space
$X_a$, we obtain the existence of the $w_n^*\in X_a$ for $I$ with the property that $I(w_n^*)\leq I(w)+\frac{1}{n}\|w-w_n^*\|$ for all $w\in X_a$ and $\|w_n-w_n^*\|\leq\frac{1}{n}$.
Consequently, we can assume
that $w_n^*\rightharpoonup w_a$ in $E^+$ and satisfies
$$
\|w_n^*\|^2\leq\left(m+\frac{3}{4} a^{\frac{p-2}{2}}\right) a^2.
$$
Given $\eta\in T_{w_n^*}=\left\{\eta\in E^+\mid(w_n^*,\eta)_{2}=0\right\}$, there is
$\delta>0$ small enough such that the path $\gamma:(-\delta,
\delta)\longrightarrow S_a \cap E^+$ defined by
$$
\gamma(t)=\sqrt{1-t^2 \frac{\|\eta\|_{2}^2}{a^2}} w_n^*+t \eta.
$$
In particular, $\gamma(t)\in C^1\left((-\delta,\delta),S_a\cap E^+\right)$ and
satisfies
\begin{equation*}
    \gamma(0)=w_n^*~~~\text{and}~~~\gamma^\prime(0)=\eta.
\end{equation*}
Hence,
\begin{equation*}
    I(\gamma(t))-I(w_n^*)\geq -\frac{1}{n}\left\|\gamma(t)-w_n^*\right\|,
\end{equation*}
and in particular,
$$
\frac{I(\gamma(t))-I(\gamma(0))}{t}=\frac{I(\gamma(t))-I(w_n^*)}{t}\geq
-\frac{1}{n}\left\|\frac{\gamma(t)-w_n^*}{t}\right\|=-\frac{1}{n}\left\|\frac{\gamma(t)-\gamma(0)}{t}\right\|,~~\forall
t\in (0,\delta).
$$
Since $I_{|X_a}\in C^1$, taking the limit of $t\to 0^+$, we get
\begin{equation*}
    \left\langle I^{\prime}\left(w_n^*\right), \eta\right\rangle\geq -\frac{1}{n}\|\eta\|.
\end{equation*}
Replacing $\eta$ by $-\eta$, we obtain
\begin{equation*}
    |\left\langle I ^{\prime}\left(w_n^*\right), \eta\right\rangle|\leq \frac{1}{n}\|\eta\|,~~\forall \eta
    \in
    T_{w_n^*},
\end{equation*}
 which shows that $\{w_n^*\}$ is a $(PS)_{c_a}$-sequence for $I$ restricted to $X_a$.\medskip

Lemma \ref{Lem3.6} and Lemma \ref{Lem3.8} ensure that there is $z_a$ such
that $\Phi(w_n^*) \rightarrow z_a$ in
$H^{1}\left(\mathbb{R}^N,\R^2\right)$ and
$k(\Phi(w_n^*))\rightarrow \omega_a$ as $n\rightarrow\infty$. Again, by Lemma \ref{Lem3.8} and \eqref{eq3.33},
we can conclude that
\begin{equation*}
    Tz_a=g(x,|z_a|)z_a+\omega_a z_a,
\end{equation*}
and
\begin{equation*}
\|z_a\|_{2}=\lim_{n\to \infty}\|\Phi(w_n^*)\|_{2}=\lim_{n\to
\infty}\|w_n^*\|_{2}=a,
\end{equation*}
the existence is completed.

\medskip

{\bf (Bifurcation)} Note that, from the proof of Lemma \ref{Lem3.8},
we have
\begin{equation*}
\omega_a\leq \frac{2c_a}{a^2}<m.
\end{equation*}
Moreover, by the definition of $\kappa$, for $a$ small enough
\begin{equation*}
\kappa(\Phi(w_n^*))=\frac{\|\Phi^+(w_n^*)\|^2}{\|\Phi^+(w_n^*)\|_{2}^2}-\frac{
\displaystyle\int_{\mathbb{R}^N}g(x,|\Phi(w_n^*)|)\Phi(w_n^*)\cdot \Phi^+(w_n^*)dx}{\|\Phi^+(w_n^*)\|_{2}^2}\geq
m-Ca^{p-2}.
\end{equation*}
Consequently, the above estimate gives that 
\begin{equation*}
    m-Ca^{p-2}\leq \omega_a\leq \frac{2c_a}{a^2}<m,
\end{equation*}
which implies that
\begin{equation*}
\omega_a\to m~~\text{as}~~a\to 0^+.
\end{equation*}
On the other hand,
\begin{equation*}
    \|z_a\|= \lim_{n\to \infty}\|\Phi(w_n^*)\|\leq \lim_{n\to \infty} \sqrt{\|w_n^*\|^2+\|\phi(w_n^*)\|^2}\leq \frac{\sqrt{5m+4}}{2}a\to 0~~\text{as}~~a\to 0^+.
\end{equation*}
By definition of \ref{Def1.1}, $m$ is a bifurcation point on the left of equation
\eqref{eq1.1}.

\section{Multiplicity of Normalized solutions}
The purpose of this section is to prove the multiplicity result stated in the introduction, we will use the multiplicity theorem of the Ljusternik-Schnirelmann type for
$I_{|X_a}$ established in \cite{jeanjean1992}, where an important tool should be Krasnoselski genus $\gamma(A)$. For the  properties of $\gamma(A)$, we refer to  \cite{Willem1996book1}.
 
  From the special construction of $z_k$ in \eqref{eq3.15}, we can see that both components of $z_k$ are the same. Therefore, in this section, we consider the following single Schr\"{o}dinger equation with non-autonomous nonlinearities:
 \begin{equation}\label{eq4.1+}
-\Delta u +V(x)u=g(x,|u|)u+\omega u, x\in\R^N,
\end{equation}
and work under the assumptions $(V_1)$ and $(H_0)$-$(H_4)$. Based on $(V_1)$, a variational framework can be established as in \cite[Sect. 6.2]{Ding-book}.  Let $E:=H^1(\R^N)$ be equipped with the inner product $\langle u,v\rangle=(|\mathcal{S}|^{1/2}u, |\mathcal{S}|^{1/2}v )_2$ and the norm $\|u\|=\||\mathcal{S}|^{1/2}u\|_2$. Therefore, the energy functional of equation $\eqref{eq4.1+}$ can be expressed as
$$J_\omega=\frac{1}{2}\|u^+\|^2-\frac{1}{2}\|u^-\|^2-\frac{\omega}{2}\int_{\R^N}|u|^2dx-\Psi(u),$$
where $E=E^+\oplus E^-$ corresponds to the spectral decomposition of $\mathcal{S}$ with respect to the positive and negative part of the spectrum, and $u=u^++u^-\in E^+\oplus E^-$.
With slight modification, we have the same symbols and sets as in the section $3$. As before, we always assume that $a>0$ is fixed and for convenience, we set
$X=X_a$ and $\Sigma(X)$ be the family of closed
symmetric subsets of $X$.

\begin{Lem}\label{Lem4.2}\rm{(\cite[Lemma 3.6]{Heinz1993NA})}
    Let $Z$ be a finite dimensional, $Y$ an arbirtary normed space, and let $\{P_k\}$ be a sequence of linear maps $Z\rightarrow Y$ for which
    \begin{equation*}
        \lim_{k\to \infty}\|P_k z\|=\|z\|,~\forall z\in Z.
    \end{equation*}
    Then
    \begin{itemize}
        \item[$(i)$] the convergence $\|P_k z\|\rightarrow \|z\|$ is uniform on bounded subsets of $Z$;
        \item[$(ii)$] there is $k_0\in \mathbb{N}$ such that $P_k$ is injective for $k\geq k_0$.
    \end{itemize}
\end{Lem}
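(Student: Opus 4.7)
The plan is to exploit the finite-dimensionality of $Z$, which allows me to reduce the question to the compact unit sphere $\Sigma := \{z \in Z : \|z\| = 1\}$. The first step is to extract a uniform operator-norm bound: picking any basis $\{e_1,\dots,e_n\}$ of $Z$ and using that each numerical sequence $\{\|P_k e_i\|\}_k$ converges (hence is bounded), writing an arbitrary $z$ in coordinates and applying the triangle inequality yields a uniform bound $\|P_k\|_{\mathrm{op}} \leq M$ for some $M > 0$ independent of $k$. This preliminary bound is what makes everything else work.

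For part $(i)$, I would introduce $\phi_k(z) := \|P_k z\|$ and observe from the previous step that the family $\{\phi_k\}$ is uniformly $M$-Lipschitz, since $|\phi_k(z_1) - \phi_k(z_2)| \leq \|P_k(z_1 - z_2)\| \leq M \|z_1 - z_2\|$. Combined with the hypothesis that $\phi_k \to \|\cdot\|$ pointwise on $Z$ and the compactness of $\Sigma$, a standard finite $\varepsilon$-net argument on $\Sigma$ (approximate by centres, use pointwise convergence at the centres, close the gap via the common Lipschitz bound) promotes pointwise convergence to uniform convergence on $\Sigma$. By the homogeneity $\phi_k(tz) = t\phi_k(z)$ for $t \geq 0$, uniform convergence extends from $\Sigma$ to any bounded subset of $Z$, since a bounded set is contained in some ball of radius $R$ and there $|\phi_k(z) - \|z\|| \leq R \sup_{\hat z \in \Sigma}|\phi_k(\hat z) - 1|$.

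For part $(ii)$, I argue by contradiction. If no such $k_0$ existed, then along a subsequence one could pick $z_k \in Z \setminus \{0\}$ with $P_k z_k = 0$, and by normalization arrange that $\tilde z_k := z_k/\|z_k\| \in \Sigma$. Then $\phi_k(\tilde z_k) = 0$ for all $k$, but the uniform convergence from part $(i)$ gives $|\phi_k(\tilde z_k) - 1| \leq \sup_{z \in \Sigma} |\phi_k(z) - 1| \to 0$, which is incompatible with $\phi_k(\tilde z_k) = 0$. Note that compactness of $\Sigma$ is not even needed here once $(i)$ is available; the uniform convergence on $\Sigma$ does all the work.

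The main obstacle is really only conceptual: one must notice that the hypothesis of pointwise convergence of norms already forces the operator norms to stay uniformly bounded, so that the family $\{\phi_k\}$ is automatically equicontinuous on compact sets. Once that uniform bound is in hand, the remaining manipulations are routine compactness arguments that do not invoke any special structure of the target space $Y$.
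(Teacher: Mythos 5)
Your proof is correct. Note that the paper itself offers no proof of this lemma at all --- it is imported verbatim as \cite[Lemma 3.6]{Heinz1993NA} --- so there is no in-paper argument to compare against; your self-contained argument (uniform operator-norm bound from convergence on a basis plus norm equivalence in finite dimensions, equi-Lipschitz $+$ pointwise convergence $\Rightarrow$ uniform convergence on the compact unit sphere via an $\varepsilon$-net, extension to bounded sets by homogeneity, and injectivity by contradiction using a normalized kernel element) is the standard and complete way to establish it.
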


Next, we will construct a map $P_k: Z \rightarrow
H^{1}(\mathbb{R}^{N},\R^2)$ that satisfies  Lemma \ref{Lem4.2} and give some estimates for it. Given $k \in \mathbb{N}$, we choose a $k$-dimensional subspace $Z$, which is spanned of the first $k$ Hermite functions in $\mathbb{R}^N$, and we endow $Z$ with the $L^2$-norm. We now define the linear maps $P_k: Z \rightarrow
H^{1}(\mathbb{R}^{N},\R^2)$ by
 \begin{equation}\label{eq4.1}
   \left(P_k \zeta\right)(x):=\frac{1}{k^{N/2}\sqrt{M\left(\left|\psi\right|^{2}\right)}}\psi(x) \zeta\left(\frac{x}{k}\right)
 \end{equation}
 for $\zeta \in Z, k \in \mathbb{N}, x \in \mathbb{R}^N$, while $\psi$ is defined in sections 3.1.
 In addition, we denote $Z^1:=\{\zeta \in Z \mid\|\zeta\|_{2}=1\}$. 
 \begin{Lem}\label{Lem4.3}
     There exist constants $C_0>0$ and $k_0\in \mathbb{N}$ such that
    \begin{equation*}
        \Psi\left(P_k \zeta\right) \geq C_0k^{-\frac{N(p-2)}{2}-\tau},~~\forall k>k_0~~\text{and}~~\zeta\in Z^1,
    \end{equation*}
   where $\{P_k\}$ be defined by \eqref{eq4.1}.
\end{Lem}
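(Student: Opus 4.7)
The plan is to adapt the argument of Lemma \ref{Lem3.2}, replacing the test function $z_k$ by $P_k\zeta$ and making every estimate uniform in $\zeta$ over the compact set $Z^1$. First I would record a uniform $L^\infty$-bound on $P_k\zeta$: since $Z$ is finite-dimensional, all norms on $Z$ are equivalent, so $\|\zeta\|_\infty$ is controlled by $\|\zeta\|_2=1$ on $Z^1$, and combined with the boundedness of $\psi$ this yields $\|P_k\zeta\|_\infty\leq C k^{-N/2}$. Hence $|P_k\zeta(x)|\leq t_0$ once $k$ is large enough, uniformly in $\zeta\in Z^1$, which makes the lower bound of $(H_4)$ available pointwise on the cone $S$.

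Applying $(H_4)$ on $S$, substituting the definition of $P_k\zeta$ from \eqref{eq4.1}, and executing the change of variables $y=x/k$ exactly as in the derivation of Lemma \ref{Lem3.2}, I expect to obtain
\begin{equation*}
\Psi(P_k\zeta)\,\geq\,C\,k^{-\frac{N(\alpha-2)}{2}-\tau}\int_{S_k}|y|^{-\tau}|\psi(ky)|^{\alpha}|\zeta(y)|^{\alpha}\,dy,
\end{equation*}
where $S_k:=\{y\in\mathbb{R}^N:ky\in S\}\supseteq S$ for all $k\geq 1$. Because $|\psi|^{\alpha}$ is uniformly almost-periodic and $g_\zeta(y):=|y|^{-\tau}|\zeta(y)|^{\alpha}\chi_{S}(y)$ lies in $L^1(\mathbb{R}^N)$, Proposition \ref{Proposition 3.1} yields, for each fixed $\zeta$,
\begin{equation*}
\int|\psi(ky)|^{\alpha}g_\zeta(y)\,dy\longrightarrow M(|\psi|^{\alpha})\int g_\zeta(y)\,dy\quad\text{as }k\to\infty.
\end{equation*}

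The last ingredient is a uniform strictly positive lower bound on $\int_S|y|^{-\tau}|\zeta(y)|^{\alpha}\,dy$ over $\zeta\in Z^1$. The map $\zeta\mapsto\int_S|y|^{-\tau}|\zeta(y)|^{\alpha}\,dy$ is continuous on the finite-dimensional $Z$, and strictly positive on $Z^1$: Hermite functions are real-analytic, so any nontrivial element of $Z$ vanishes on a set of measure zero, while $S$ has positive measure. Compactness of $Z^1$ then yields a minimum $c_0>0$. Assembling these pieces and comparing exponents using the admissible range of $\alpha$ produces the desired bound $\Psi(P_k\zeta)\geq C_0 k^{-\frac{N(p-2)}{2}-\tau}$ for every $k\geq k_0$ and every $\zeta\in Z^1$.

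The main technical obstacle will be to upgrade the limit provided by Proposition \ref{Proposition 3.1} to one that is uniform in $\zeta\in Z^1$, so that a single threshold $k_0$ and single constant $C_0$ suffice. This is handled by observing that the family $\{g_\zeta\}_{\zeta\in Z^1}$ is a compact subset of $L^1(\mathbb{R}^N)$ (the continuous image of the compact sphere $Z^1$ under $\zeta\mapsto g_\zeta$), which permits a standard equicontinuity argument to convert pointwise convergence into uniform convergence along this compact family, completing the proof.
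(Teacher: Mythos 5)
Your proposal is correct and follows essentially the same route as the paper: apply $(H_4)$, rescale to pull out the factor $k^{-\frac{N(\alpha-2)}{2}-\tau}$, invoke Proposition \ref{Proposition 3.1} for the almost-periodic weight $|\psi|^\alpha$, and use finite-dimensionality of $Z$ together with real-analyticity of Hermite functions to bound $\int_S|y|^{-\tau}|\zeta|^\alpha\,dy$ away from zero on $Z^1$; the paper secures the uniformity in $\zeta$ by applying Lemma \ref{Lem4.2}(i) to the auxiliary maps $\Lambda_k\zeta=\psi(k\cdot)\zeta$ into $L^\alpha(S;|y|^{-\tau}dy)$, whereas you use compactness of $\{g_\zeta\}_{\zeta\in Z^1}$ in $L^1$ plus equi-Lipschitz continuity of $g\mapsto\int|\psi(ky)|^\alpha g\,dy$, which is the same idea in different clothing (and you additionally check the hypothesis $|P_k\zeta|\le t_0$ of $(H_4)$, which the paper leaves implicit). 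The only caveat is your closing phrase about ``comparing exponents'': the computation genuinely yields the exponent $-\frac{N(\alpha-2)}{2}-\tau$, and passing to $-\frac{N(p-2)}{2}-\tau$ requires $\alpha\le p$, which is not guaranteed by the hypotheses --- but the paper's own proof makes exactly the same silent relabeling, so this is an inconsistency in the statement rather than a gap in your argument.
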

\begin{proof}
By \eqref{eq4.1} and $(H_4)$, we have
    $$
    \begin{aligned}
        \Psi(P_k \zeta) & \geq L\int_{S}|x|^{-\tau}|P_k \zeta|^{\alpha}dx\\
        & =\frac{Lk^{-\frac{N\alpha}{2}}}{M^{\frac{\alpha}{2}}(|\psi|^2)}\int_{ \mathbb{R}^N}|x|^{-\tau} \bigg|\psi(x)\zeta\bigg(\frac{x}{k}\bigg)\bigg|^\alpha dx \\
        & =\frac{Lk^{-\frac{N(\alpha-2)}{2}-\tau}}{M^{\frac{\alpha}{2}}(|\psi|^2)} \int_{S_k}|y|^{-\tau}|\psi(k y)|^\alpha|\zeta(y)|^\alpha dy,
    \end{aligned}
    $$
Using Proposition \ref{Proposition 3.1} we know that 
\begin{align}\label{eq4.2}
\frac{M^{\frac{\alpha}{2}}(|\psi|^2)}{Lk^{-\frac{N(\alpha-2)}{2}-\tau}}  \Psi(P_k \zeta)&\geq
\int_{\mathbb{R}^N}\left|\psi(k y)\right|^\alpha
\chi_S(y)|y|^{-\tau} |\zeta(y)|^\alpha dy\\ &\to
M\left(|\psi|^\alpha\right) \int_{\mathbb{R}^N} g(y) dy,\nonumber
\end{align}
where
    $$
g(y):=\chi_S(y)|y|^{-\tau}|\zeta(y)|^\alpha,
$$
and $\chi_S$ is the characteristic function of the set $S$.

\medskip

Note that $\zeta\in Z$ which is slightly different from the proof of Lemma \ref{Lem3.2}, we now consider the linear maps $\Lambda_k: Z \rightarrow L^\alpha\left(S ;\ |y|^{-\tau} d y\right)$ given by
    $$
    \left(\Lambda_k \zeta\right)(x):=\psi(k x) \zeta(x), \  x \in S,
    $$
    where the norm on $Z$ is now given by
    $$
    \|\zeta\|_*:=\left(M\left(|\psi|^{\alpha}\right) \int_S|y|^{-\tau}|\zeta(y)|^\alpha dy\right)^{\frac{1}{\alpha}} .
    $$
Due to the fact that every $\zeta \in Z$ is real analytic, we can assume that for any $\zeta\in Z^1$,
    \begin{equation*}
\zeta(x)\neq 0~\text{a.e.}~\text{on}~S.
    \end{equation*}
As a consequence, $\|\cdot\|_*$ is indeed a norm and the subspace $Z$ dimension is finite which leads to the equivalence of the norms on $Z$, and we know that there exists a positive constant $C_5$
such that
    \begin{equation}\label{eq4.3}
\|\zeta\|_*\geq C_5,~~ \forall \zeta \in Z^1.
    \end{equation}
Combining \eqref{eq4.2} and
\eqref{eq4.3}, we know that
    $$      \Psi\left(P_k \zeta\right) \geq \frac{LC_5^\alpha}{
        M^{\frac{\alpha}{2}}\left(|\psi|^2\right)}k^{-\frac{N(\alpha-2)}{2}-\tau} =: C_6k^{-\frac{N(p-2)}{2}-\tau}>0,
    $$
    for $k>k_0$, $\zeta\in Z^1$. 
    \hfill
\end{proof}

\begin{Lem}\label{Lem4.4}
    For any $k\in \mathbb{N}$ and let $\{P_k\}$ be defined by \eqref{eq4.1}, then
    \begin{itemize}
        \item[$(i)$] $\lim\limits _{k \rightarrow \infty}\left\|P_k \zeta\right\|_{2}=\|\zeta\|_{2}$ for all $\zeta \in Z$;
        \item[$(ii)$]  there exists constant $C$ such that, for $k$ large enough
        \begin{equation*}
        0<\Psi(P_k \zeta) \leq C~ \text { for all } \zeta \in Z^1;
        \end{equation*}
        \item[$(iii)$]
        \begin{equation*}
            \frac{\sup\limits_{\zeta\in Z^1}
                 \left|((T-mI) P_k \zeta, P_k \zeta)_{2}\right|}
            {\inf\limits_{\zeta\in Z^1} \Psi\left(P_k \zeta\right)} \rightarrow 0 \text { as } k \rightarrow \infty,
        \end{equation*}
    and
            \begin{equation*}
        \frac{\sup\limits_{\zeta\in Z^1}\|(T-mI) P_k \zeta\|^2_{2}}
        {\inf\limits_{\zeta\in Z^1} \Psi\left(P_k \zeta\right)} \rightarrow 0 \text { as } k \rightarrow \infty .
    \end{equation*}
    \end{itemize}
\end{Lem}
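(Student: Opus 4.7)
The plan is to prove the three parts in sequence, with the crucial feature that every bound must be uniform in $\zeta \in Z^1$ (because of the $\sup$ and $\inf$ appearing in part (iii)). This uniformity is ensured throughout by the finite dimensionality of $Z$, which makes all norms on $Z$ equivalent, so quantities like $\|\zeta\|_p$, $\|\nabla\zeta\|_2$, $\|\Delta\zeta\|_2$ are uniformly bounded on $Z^1$. The computations closely mirror Lemmas \ref{Lem3.2} and \ref{Lem3.3}, with the cutoff $\eta(x/k)$ replaced by $\zeta(x/k)$ and the scalar $\psi$ taking the role of $\nu$.

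For (i), I would substitute $y = x/k$ to obtain
$$\|P_k\zeta\|_2^2 = \frac{1}{M(|\psi|^2)}\int_{\R^N}|\psi(ky)|^2|\zeta(y)|^2\,dy,$$
and apply Proposition \ref{Proposition 3.1} with the uniformly almost-periodic $f=|\psi|^2$ and $g=|\zeta|^2 \in L^1(\R^N)$, yielding the limit $\|\zeta\|_2^2$. For (ii), I would use the growth bound \eqref{eqg_4} together with the same change of variables to obtain
$$\|P_k\zeta\|_p^p = O\bigl(k^{-N(p-2)/2}\bigr), \qquad \|P_k\zeta\|_q^q = O\bigl(k^{-N(q-2)/2}\bigr),$$
uniformly in $\zeta \in Z^1$ (since $\|\zeta\|_p, \|\zeta\|_q$ are bounded on $Z^1$). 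Both tend to zero because $p,q > 2$, giving the uniform upper bound $\Psi(P_k\zeta) \le C$ (trivially, since it tends to $0$), while positivity $\Psi(P_k\zeta) > 0$ follows from Lemma \ref{Lem4.3}.

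For (iii), the key computation parallels \eqref{eq3.17}--\eqref{eq3.20}. Using that $\psi$ solves $-\Delta\psi + V\psi = m\psi$, a product-rule expansion shows
$$(T-mI)(P_k\zeta) = -\frac{2}{k\,k^{N/2}\sqrt{M(|\psi|^2)}}\nabla\psi\cdot(\nabla\zeta)(\cdot/k) - \frac{1}{k^2\,k^{N/2}\sqrt{M(|\psi|^2)}}\,\psi\,(\Delta\zeta)(\cdot/k)$$
(doubled in the vector-valued setup). Squaring, integrating, and changing variables $y=x/k$, I would control each resulting integral by the boundedness of $\psi$, $\nabla\psi$ (which are bounded because they are uniformly almost-periodic) together with the uniform bounds on $\|\nabla\zeta\|_2, \|\Delta\zeta\|_2$ on $Z^1$; this yields
$$\sup_{\zeta\in Z^1}\|(T-mI)P_k\zeta\|_2^2 = O(1/k^2), \qquad \sup_{\zeta\in Z^1}|((T-mI)P_k\zeta,P_k\zeta)_2| = O(1/k^2).$$
Combining with Lemma \ref{Lem4.3}, which provides $\inf_{\zeta\in Z^1}\Psi(P_k\zeta) \ge C_0 k^{-N(\alpha-2)/2 - \tau}$, both ratios in (iii) are $O\bigl(k^{-2 + N(\alpha-2)/2 + \tau}\bigr)$, and this exponent is strictly negative precisely because $(H_4)$ imposes $\tau < (4 - N(\alpha-2))/2$.

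The main technical obstacle is achieving uniformity over $\zeta \in Z^1$ in the sup, because Proposition \ref{Proposition 3.1} only gives pointwise convergence for a single $g$. The way around this is to never pass to the limit inside $\zeta$-dependent integrals; instead, I would bound them crudely via $\|\psi\|_\infty$ and $\|\nabla\psi\|_\infty$ combined with equivalence of norms on the finite-dimensional $Z$. A secondary subtlety is the interpretation of $T$ in this single-equation setting: with $P_k\zeta = (u,u)$, the matrix operator $T - mI$ doubles the scalar operator $\mathcal{S} - mI$ acting on $u$, so the estimates above must be understood up to a harmless factor of $2$.
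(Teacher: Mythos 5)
Your overall route is the same as the paper's: part (i) is Proposition \ref{Proposition 3.1} applied to $f=|\psi|^2$, $g=|\zeta|^2\in L^1$; part (ii) reduces to a uniform bound on $Z^1$ via equivalence of norms on the finite-dimensional $Z$ (the paper bounds $P_k(Z^1)$ in $H^1$ and invokes \eqref{eqg_4}, while you compute $\|P_k\zeta\|_p^p=O(k^{-N(p-2)/2})$ directly --- both work, and yours even shows $\Psi(P_k\zeta)\to 0$ uniformly); part (iii) is the computation of Lemma \ref{Lem3.3} with $\eta$ replaced by $\zeta/\sqrt{M(|\psi|^2)}$, made uniform in $\zeta\in Z^1$ by the uniform bounds on $\|\nabla\zeta\|_2$ and $\|\Delta\zeta\|_2$. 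The positivity in (ii) and the denominator in (iii) come from Lemma \ref{Lem4.3}, exactly as in the paper.

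There is one step where your stated method, taken literally, falls short. For $\|(T-mI)P_k\zeta\|_2^2$ a crude term-by-term bound of the expansion does give $O(1/k^2)$, since the leading term already carries the factor $k^{-1}\nabla\psi\cdot(\nabla\zeta)(\cdot/k)$ and is then squared. But for the quadratic form $((T-mI)P_k\zeta,P_k\zeta)_2$ the first-order term is
\begin{equation*}
-\frac{2}{k}\,k^{-N}\frac{1}{M(|\psi|^2)}\int_{\R^N}\psi\,\nabla\psi\cdot(\nabla\zeta)\Bigl(\frac{x}{k}\Bigr)\,\zeta\Bigl(\frac{x}{k}\Bigr)\,dx,
\end{equation*}
and bounding it "crudely via $\|\psi\|_\infty$ and $\|\nabla\psi\|_\infty$" yields only $O(1/k)$, not $O(1/k^2)$. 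The $O(1/k^2)$ rate is produced by the integration by parts in \eqref{eq3.18}: writing $2\psi\nabla\psi=\nabla(\psi^2)$ and integrating by parts makes this term cancel against the $\Delta\zeta$ term up to $k^{-2}\int\psi^2(ky)|\nabla\zeta(y)|^2dy$. This is not a cosmetic issue: $(H_4)$ only guarantees $\frac{N(\alpha-2)}{2}+\tau<2$, so an $O(1/k)$ numerator against the denominator $C_0k^{-\frac{N(\alpha-2)}{2}-\tau}$ from Lemma \ref{Lem4.3} need not tend to zero. Once you insert the integration-by-parts cancellation (which your reference to \eqref{eq3.17}--\eqref{eq3.20} presumably intends), the argument closes and coincides with the paper's.
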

\begin{proof}
$(i)$ Clearly, it follows from Proposition \ref{Proposition 3.1} that
\begin{equation*}
    \int_{\mathbb{R}^N}|P_k \zeta|^2dx=\frac{1}{M\left(|\psi|^2\right)}\int_{\mathbb{R}^N}\zeta^2(x)|\psi(kx)|^2dx\rightarrow \int_{\mathbb{R}^N}|\zeta|^2dx~\text{as}~k\to \infty.
\end{equation*}
$(ii) $By Lemma \ref{Lem4.3} we know that $\Psi(P_k \zeta)>0$ for all $\zeta\in Z^1$, so we only need to prove  that $P_k(Z^1)$ is uniformly bounded in
$H^{1}(\mathbb{R}^N)$. For all $\zeta \in
Z^1$, since all norms on $Z$ are equivalent, there exists constant $C_7>0$
such that
\begin{equation}\label{eq4.4}
\int_{\mathbb{R}^N}|\nabla\zeta|^2dx\leq C_7.
\end{equation}
Repeat the arguments of Lemma \ref{Lem3.3} that there exist
constants $C_1^\prime,C_2^\prime>0$ such that, for $k$ large enough
\begin{equation*}
    0<C_1^\prime\leq \|L_k \zeta\|\leq C_2^\prime, \ \ \forall \zeta\in Z^1.
\end{equation*}
$(iii)$ For any $\zeta \in Z^1$, in view of \eqref{eq4.4} and repeating the arguments of Lemma \ref{Lem3.3}, there exists a positive constant $C$ independent of $ \zeta$ such that
\begin{equation}\label{eq4.5}
    |((T-mI)P_k \zeta,P_k \zeta)_{2}|\leq \frac{C}{k^2}
\end{equation}
and
\begin{equation}\label{eq4.6}
    \|(T-mI)P_k \zeta\|^2_{2}\leq \frac{C}{k^2}.
\end{equation}
By Lemma \ref{Lem4.3}, $\eqref{eq4.5}$ and $\eqref{eq4.6}$, we have 
\begin{equation*}
    \frac{\sup\limits_{\zeta\in Z^1}\|(T-mI)P_k \zeta\|^2_{2}}{\inf\limits_{\zeta\in Z^1}\Psi(P_k \zeta)}\leq Ck^{\frac{N(p-2)}{2}+\tau-2}\to 0
\end{equation*}
and
\begin{equation*}
    \frac{\sup\limits_{\zeta\in Z^1}\left|((T-mI)P_k \zeta,P_k \zeta)_{2}\right|}{\inf\limits_{\zeta\in Z^1}\Psi(P_k \zeta)}\leq Ck^{\frac{N(p-2)}{2}+\tau-2}\to 0
\end{equation*}
as $k\to \infty$ since $\tau\in (0,\frac{4-N (p-2)}{2})$ and the Lemma is proven.
\end{proof}
\begin{Lem}\label{Lem4.5} For any $k\in \mathbb{N}$,
there exists a sequence $\left\{X_n^k\right\}_{n=1}^\infty$ of
$k$-dimensional linear subspaces of  $W$ such that 
\begin{equation}\label{eq++}
\lim\limits_{n \rightarrow \infty}\frac{\sup\limits_{w\in S_n^k}
\left(\|w\|^2-m\right)}{\inf\limits_{w\in S_n^k}\Psi(w)} 
=0,
\end{equation}
where
    $$
S_n^k:=\left\{u \in X_n^k \mid\|u\|_{2}=1\right\}.
    $$
\end{Lem}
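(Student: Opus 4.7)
The plan is to take $X_n^k$ to be the projection onto $E^+$ of the image $P_n(Z)$, where $Z$ is the fixed $k$-dimensional Hermite subspace used in \eqref{eq4.1} and $P_n$ is the operator from that display with scaling parameter $n$. Explicitly, set
\[
X_n^k := \{(P_n\zeta)^+ : \zeta \in Z\} \subset E^+.
\]
The verification that $\dim X_n^k = k$ for $n$ large proceeds as in (3.24)--(3.25) with $P_n\zeta$ playing the role of $z_k$: from $\|(P_n\zeta)^-\|^2+m\|(P_n\zeta)^-\|_2^2 \leq \|(T-mI)P_n\zeta\|_2\|(P_n\zeta)^-\|_2$ and Lemma \ref{Lem4.4}(iii) one gets $\|(P_n\zeta)^-\|_2 \to 0$ uniformly on $Z^1$, so Lemma \ref{Lem4.4}(i) gives $\|(P_n\zeta)^+\|_2 \to \|\zeta\|_2$, and Lemma \ref{Lem4.2}(ii) applied to the linear map $\zeta \mapsto (P_n\zeta)^+$ forces injectivity for $n$ large.

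Next I would show $X_n^k \subset W$ using the identity, valid by $T$-invariance of $E^+ \oplus E^-$,
\[
\|(P_n\zeta)^+\|^2 - m\|(P_n\zeta)^+\|_2^2 \;=\; ((T-mI)P_n\zeta,P_n\zeta)_2 \;+\; \|(P_n\zeta)^-\|^2 + m\|(P_n\zeta)^-\|_2^2.
\]
The last two summands are dominated by $\|(T-mI)P_n\zeta\|_2^2/m$, so by Lemma \ref{Lem4.4}(iii) the whole right-hand side is $o(\Psi(P_n\zeta))$, and in particular $o(1)$, uniformly in $\zeta \in Z^1$. Hence $\|(P_n\zeta)^+\|^2 < (m+1)\|(P_n\zeta)^+\|_2^2$ for $n$ large, placing $X_n^k$ inside $W$.

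For the ratio, every $w \in S_n^k$ has the form $w = (P_n\zeta)^+/\|(P_n\zeta)^+\|_2$ with $\zeta \in Z^1$, and $\|(P_n\zeta)^+\|_2 \to 1$. The numerator $\|w\|^2 - m$ then equals the previous display divided by $\|(P_n\zeta)^+\|_2^2$ and is still $o(\Psi(P_n\zeta))$. To lower-bound $\Psi(w)$, I would run the $\rho$-argument of Lemma \ref{Lem3.4}: Lemmas \ref{Lem4.3}--\ref{Lem4.4} ensure $\|(P_n\zeta)^-\|/\|P_n\zeta\| \leq \rho(P_n\zeta/\|P_n\zeta\|)$ for $n$ large, so Lemma \ref{Lem3.1} yields $\Psi((P_n\zeta)^+) \geq c\,\Psi(P_n\zeta)$, and \eqref{eqg_3} propagates this to the normalized $w$. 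Putting everything together, the quotient in \eqref{eq++} is bounded by $o(\Psi(P_n\zeta))/(c\,\Psi(P_n\zeta)) = o(1)$ uniformly in $\zeta \in Z^1$, giving the claimed limit.

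The main obstacle is enforcing \emph{uniformity in $\zeta \in Z^1$} at each step, especially for the lower bound $\Psi((P_n\zeta)^+) \geq c\,\Psi(P_n\zeta)$. This is where finite-dimensionality of $Z$ (so $Z^1$ is compact), the uniform bounds already packaged into Lemma \ref{Lem4.4}, and the lower bound of Lemma \ref{Lem4.3} are indispensable: they promote what would otherwise be pointwise convergences in $\zeta$ to uniform ones, so that the supremum--infimum quotient in \eqref{eq++}, and not merely a pointwise ratio, tends to zero.
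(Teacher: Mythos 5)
Your proposal is correct and follows essentially the same route as the paper: the same choice $X_n^k=P_n^+(Z)$ with injectivity from Lemma \ref{Lem4.2}, the numerator controlled by $((T-mI)P_n\zeta,P_n\zeta)_2$ and $\|(T-mI)P_n\zeta\|_2$ via Lemma \ref{Lem4.4}(iii), and the denominator bounded below by $c\,\Psi(P_n\zeta)$ through the convexity inequality \eqref{eq3.13} and Lemma \ref{Lem4.3}. The only cosmetic difference is that you route the lower bound $\Psi(w)\geq c\,\Psi(P_n\zeta)$ through the $\rho$-argument of Lemma \ref{Lem3.1} (as in Lemma \ref{Lem3.4}), whereas the paper applies \eqref{eq3.13} directly together with the estimate $\Psi(u_n^-)\leq C\|(T-mI)u_n\|_2^p\leq 2^{-q}\Psi(u_n)$; both rest on the same convexity inequality and yield the same uniform bound over $Z^1$.
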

\begin{proof}
    To prove the existence of $\left\{X_n^k\right\}$, we need to make full use of Lemma \ref{Lem4.2}. In fact,  fix $\zeta \in Z^1$ and let $u_n:=P_n\zeta$. \eqref{eq4.6} gives 
            \begin{equation*}
        \|(T-mI) u_n\|_{2}\to 0~\text{as}~n\to \infty.
    \end{equation*}
and therefore
    \begin{equation}\label{eq5.21+}
    \|(T-mI) u_n\|_{2}\|u_n^-\|_{2}\geq |((T-mI) u_n,u_n^-)_{2}|=\|u_n^-\|^2+m\|u_n^-\|_{2}^2.
\end{equation}
Using \eqref{eq5.21+} we know that 
            \begin{equation}\label{eq4.7}
    \lim\limits_{n\to \infty}\|u_n^-\|_{2}= \lim\limits_{n\to \infty}\|u_n^-\|= 0\ \text{and}\ \|u_n^-\|_{2}\leq \|(T-mI) u_n\|_{2}.
\end{equation}
Furthermore, we have $\|u_n\|_{2}=\|P_n
\zeta\|_{2}\to \|\zeta\|_{2}=1$ as $n\to \infty$. Combining these two facts above, we arrive at the convergence
            \begin{equation}\label{eq4.9}
    \|(P_n \zeta)^+\|_2=\|u_n^+\|_{2}\to 1~\text{as}~n\to \infty.
\end{equation}
For any $\zeta\in Z$, \eqref{eq4.9} makes the operator $P_n^+:Z\rightarrow L^2(\mathbb{R}^N,\R)$ given by  
\begin{equation*}
P_n^+ \zeta=(P_n \zeta)^+
\end{equation*}
satisfies the assumption of Lemma \ref{Lem4.2}, let us assume without loss of
generality that $P_n^+$ is injective for every $n$. If $X_n^k:=P_n^+(Z)$, then $$\dim X_n^k=k.$$ In addition, by \eqref{eq4.5} we know that
    \begin{equation*}
                |((T-mI)P_n \zeta,P_n \zeta)_{2}|\to 0~\text{as}~n\to \infty,
    \end{equation*}
that is,
    \begin{equation*}
\|u_n^+\|^2-\|u_n^-\|^2-m\|u_n\|_{2}\to 0~\text{as}~n\to \infty,
\end{equation*}
which implies that
            \begin{equation*}
\|u_n^+\|\to \sqrt{m}~\text{as}~n\to \infty,
\end{equation*}
     and so $X_n^k\subset W$ for $n$ large. Moreover,
        \begin{equation*}
S_n^k=\left\{\frac{P_n^+\zeta}{\|P_n^+\zeta\|_{2}}\bigg|\zeta\in
Z^1\right\}.
    \end{equation*}
    
We conclude the proof of this Lemma by proving \eqref{eq++}. For any $w_n\in S_n^k$, there exists $\zeta \in
Z^1$ such that $w_n=\frac{P_n^+\zeta}{\|P_n^+\zeta\|_{2}}$. Let
$u_n=P_n\zeta$ again, similarly to the arguments of Lemma \ref{Lem3.4}, we may assume $\frac{1}{2}\leq \|u_n^+\|_{2}
\leq 1$, then 
    \begin{equation}\label{eq4.10}
\Psi(w_n)\geq
2^{1-q}\Psi(u_n)-\Psi(u_n^-),
\end{equation}
where the proof here uses the same method as \eqref{eq3.13}.
By \eqref{eqg_4}, \eqref{eq4.7} and Lemma \ref{Lem4.4}, for
$n$ large, we infer that
    \begin{equation}\label{eq4.11}
\Psi(u_n^-)\leq C\|u_n^-\|^p\leq C \|(T-mI)u_n\|_{2}^p\leq
2^{-q}\Psi(u_n).
\end{equation}
Thus, \eqref{eq4.10} and \eqref{eq4.11} yields
    \begin{equation}\label{eq4.12}
    \Psi(w_n)\geq 2^{-q}\Psi(u_n).
\end{equation}
Furthermore, we have
    \begin{equation}\label{eq4.13}
    \begin{split}
    \|w_n\|^2-m&=\frac{((T-mI)u_n^+,u_n^+)_{2}}{\|u_n^+\|_{2}^2}=\frac{((T-mI)u_n,u_n^+)_{2}}{\|u_n^+\|_{2}^2}\\
        &\leq\frac{\|(T-mI)u_n\|_{2}}{\|u_n^+\|_{2}}\leq 2\|(T-mI)u_n\|_{2}.
    \end{split}
\end{equation}
Since $P_n$ is injective and combined with \eqref{eq4.12} and
\eqref{eq4.13}, it can be inferred that
    \begin{equation*}
        \sup_{w_n\in S_n^k}\left(\|w\|^2-m\right)\leq 2\sup_{\zeta\in Z^1}\|(T-mI)P_n \zeta\|_{2}
\end{equation*}
and
    \begin{equation*}
    \inf_{w\in S_n^k}\Psi(w_n)\geq 2^{-q}\inf _{\zeta\in Z^1}\Psi\left(P_n \zeta\right).
\end{equation*}
Hence, Lemma \ref{Lem4.4} implies the conclusion of Lemma
\ref{Lem4.5} holds.
\end{proof}
\begin{Lem}\label{Lem4.6}
    For any $k\in \mathbb{N}$ and $a>0$ small enough, we have
    \begin{equation*}
        b_k<\frac{ma^2}{2},
    \end{equation*}
where $b_k:=\inf\limits_{A\in \Gamma_k}\sup\limits_{w\in A}I(w)$, $\Gamma_k:=\left\{A \in \Sigma(X)\mid \gamma(A) \geq k\right\}$, $k\geq 1$.
\end{Lem}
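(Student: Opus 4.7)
\emph{Plan.} The idea is to construct, for each fixed $k$, an explicit set $A\in\Gamma_k$ on which $I$ takes values strictly below $\frac{ma^2}{2}$, and then conclude from the definition of $b_k$ as an infimum. The two ingredients needed are already in place: the $k$-dimensional subspaces $X_n^k\subset W$ produced in Lemma \ref{Lem4.5}, together with the one-parameter scaling argument used in the proof of Lemma \ref{Lem3.6}.

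Fix $k\in\mathbb{N}$. Invoking Lemma \ref{Lem4.5}, I would select the subspaces $X_n^k$ together with their unit $L^2$-spheres $S_n^k$. The proof of Lemma \ref{Lem4.5} actually yields the uniform bound $\sup_{w\in S_n^k}(\|w\|^2-m)\leq 2\sup_{\zeta\in Z^1}\|(T-mI)P_n\zeta\|_{2}\to 0$ as $n\to\infty$, so for every fixed $a>0$ and all $n$ sufficiently large one has $\|w\|^2\leq m+a^{(p-2)/2}$ for every $w\in S_n^k$. Consequently the rescaled set $A_n:=aS_n^k$ lies inside $X=X_a$. Moreover, the odd linear map $w\mapsto aw$ is a homeomorphism between $S_n^k$ and $A_n$, so $\gamma(A_n)=\gamma(S_n^k)=k$, and hence $A_n\in\Gamma_k$.

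Next I would estimate $\sup_{w\in A_n}I(w)$ uniformly in $\nu\in S_n^k$ by repeating, verbatim, the chain of inequalities of Lemma \ref{Lem3.6} applied to $\hat\nu=a\nu$ in place of $\hat\nu_n$. This yields, for every $\nu\in S_n^k$,
$$I(a\nu)\leq\frac{a^2}{2}\|\nu\|^2-2^{1-2q}a^q\Psi(\nu)-\frac{1}{2}\|\phi(a\nu)\|^2+C\|\phi(a\nu)\|^p+C\|\phi(a\nu)\|^q,$$
where the last three terms are nonpositive for $a>0$ small enough (using $p>2$ together with the continuity of $\phi$ and $\phi(0)=0$). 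The remaining strict inequality $\frac{a^2}{2}\|\nu\|^2-2^{1-2q}a^q\Psi(\nu)<\frac{ma^2}{2}$ rearranges to
$$\frac{\|\nu\|^2-m}{\Psi(\nu)}<2^{2-2q}a^{q-2},$$
and this is exactly the uniform ratio bound supplied by Lemma \ref{Lem4.5} for all $n$ sufficiently large with $k$ and $a$ held fixed. Taking the supremum over $\nu\in S_n^k$ and then the infimum in the definition of $b_k$ gives $b_k\leq\sup_{w\in A_n}I(w)<\frac{ma^2}{2}$.

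The main obstacle is uniformity: in Lemma \ref{Lem3.6} the estimate was carried out for a single test element $\hat\nu_n$, whereas here the same bound must survive uniformly over the entire genus-$k$ sphere $S_n^k$ in order to land in $\Gamma_k$ with the correct sup-value. This is precisely what the ratio formulation of Lemma \ref{Lem4.5}, built on the uniform estimates of Lemma \ref{Lem4.4}, was designed to provide. The only remaining subtlety is the order of quantifiers; I would fix $k$, then $a$ small enough to absorb the $\phi$-terms and to leave room in $2^{2-2q}a^{q-2}$, and finally choose $n$ large enough in terms of both.
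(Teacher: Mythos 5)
Your proposal is correct and follows essentially the same route as the paper: rescale the unit spheres $S_n^k$ of the subspaces from Lemma \ref{Lem4.5} to radius $a$, check they lie in $X_a$, run the chain of estimates from Lemma \ref{Lem3.6} uniformly over the sphere, and close with the ratio bound of Lemma \ref{Lem4.5}. The only difference is cosmetic: you make explicit the genus count $\gamma(aS_n^k)=k$ and the order of quantifiers ($k$, then $a$, then $n$), both of which the paper leaves implicit.
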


\begin{proof} In order to conclude the strict inequality holds, it suffices to show that for any $a>0$ small enough, there exists
$n(a)\in \mathbb{N}$ such that
\begin{equation*}
\sup_{w\in S_n^k(a)}I(w)<\frac{ma^2}{2}~\text{for all}\ n\geq n(a),
\end{equation*}
where $$
S_n^k(a):=\left\{u \in X_n^k \mid\|u\|_{2}=a\right\}.
    $$
In fact, for any $w_n\in S_n^k(a)$, there exists $\xi_n\in S_n^k$ with
$w_n=a\xi_n$. Recalling the Lemma \ref{Lem4.4}, we know that $\{\xi_n\}$ is uniformly
bounded in $Z^1$. Therefore, like the proof of Lemma \ref{Lem3.6}, we get for $a$ small enough
        \begin{equation*}
I(w_n)\leq \frac{a^2}{2}\|\zeta_n\|^2-2^{1-2q}a^q\Psi(\zeta_n),
    \end{equation*}
which implies that 
        \begin{equation*}
\sup_{w\in S_n^k(a)}I(w)\leq  \frac{a^2}{2}\sup_{w\in
S_n^k}\|w\|^2-2^{1-2q}a^q\inf_{w\in S_n^k}\Psi(w).
\end{equation*}
By Lemma \ref{Lem4.5}, there exists $n(a)\in \mathbb{N}$ such that
    \begin{equation*}
\sup_{w\in S_{n_0}^k(a)}I(w)<\frac{ma^2}{2} \ \text{for all}\  n_0\geq n(a).
    \end{equation*}
Moreover, we also have $S_{n_0}^k(a)\subset X_a$ for $a$ small enough.
\hfill
\end{proof}
\par
{\bf The proof of Theorem 1.2}~~ It can be seen from Lemma \ref{Lem4.6} that functional $I$ satisfies the Palais-Smale condition, so Theorem \ref{Thm1.2} can be deduced from the Lemma 6.2 in \cite{jeanjean1992}. To obtain the bifurcation conclusion, we only need to repeat the same arguments as Theorem \ref{Thm1.1}.
\par
{\bf Conflict Of Interest Statement.} The authors declare that there are no conflict of interests, we do not have any possible conflicts of interest.
\par
{\bf Data Availability Statement.} Our manuscript has non associated data.
\bibliographystyle{plain}
\bibliography{reference}
\end{document}